\numberwithin{equation}{section}
\newcounter{keepeqno}
\newcommand{\BA}{{\mathbb {A}}}
\newcommand{\BC}{{\mathbb {C}}}
\newcommand{\BG}{{\mathbb {G}}}
\newcommand{\BL}{{\mathbb {L}}}
\newcommand{\CC}{{\mathcal {C}}}
\newcommand{\CM}{{\mathcal {M}}}
\newcommand{\CS}{{\mathcal {S}}}
\newcommand{\Fd}{{\mathfrak {d}}}
\newcommand{\Fe}{{\mathfrak {e}}}
\newcommand{\Fg}{{\mathfrak {g}}}
\newcommand{\Fn}{{\mathfrak {n}}}
\newcommand{\Fp}{{\mathfrak {p}}}
\newcommand{\Fs}{{\mathfrak {s}}}
\newcommand{\RI}{{\mathrm {I}}}
\newcommand{\ScF}{{\mathscr {F}}}
\newcommand{\ScI}{{\mathscr {I}}}
\newcommand{\ScL}{{\mathscr {L}}}
\newcommand{\ScZ}{{\mathscr {Z}}}
\newcommand{\Ad}{{\mathrm{Ad}}}
\newcommand{\ad}{{\mathrm{ad}}}
\newcommand{\bkn}{{\mathrm{BKN}}}
\newcommand{\der}{{\mathrm{der}}}
\newcommand{\End}{{\mathrm{End}}}
\newcommand{\fp}{{\mathrm{fp}}}
\newcommand{\GL}{{\mathrm{GL}}}
\newcommand{\I}{{\mathrm{I}}}
\newcommand{\Id}{{\mathrm{Id}}}
\newcommand{\pr}{{\mathrm{pr}}}
\newcommand{\PGL}{{\mathrm{PGL}}}
\renewcommand{\Re}{{\mathrm{Re}}}
\newcommand{\rsc}{{\mathrm{sc}}}
\newcommand{\SL}{{\mathrm{SL}}}
\newcommand{\Spec}{{\mathrm{Spec}}}
\newcommand{\SO}{{\mathrm{SO}}}
\newcommand{\Sp}{{\mathrm{Sp}}}
\newcommand{\st}{{\mathrm{st}}}
\newcommand{\ud}{\,\mathrm{d}}
\newcommand{\wh}{\widehat}
\newcommand{\ol}{\overline}
\def\alp{{\alpha}}
\def\diag{{\rm diag}}
\def\vsig{{\varsigma}}
\def\sym{{\rm sym}}
\def\sig{{\sigma}}
\def\ome{{\omega}}
\def\lam{{\lambda}}
\def\gam{{\gamma}}
\newcommand{\sslash}{\mathbin{/\mkern-6mu/}}
\newtheorem{thm}{Theorem}[section]
\newtheorem{dfn}[thm]{Definition}
\newtheorem{prp}[thm]{Proposition}
\newtheorem{lem}[thm]{Lemma}
\newtheorem{cor}[thm]{Corollary}
\newtheorem{cnj}[thm]{Conjecture}
\newcommand{\Rmnum}[1]{\expandafter\@slowromancap\romannumeral #1@}
\begin{document}

\title[On the Braverman-Kazhdan-Ng\^o Triples]
{On the Braverman-Kazhdan-Ng\^o Triples}

\author{Dihua Jiang, Zhaolin Li, and Guodong Xi}	
\address{School of Mathematics, University of Minnesota, 206 Church St. S.E., Minneapolis, MN 55455, USA.}
\email{jiang034@umn.edu}
\email{li001870@umn.edu}
\email{xi000023@umn.edu}

\keywords{Reductive Monoid, Automorphic $L$-function, Braverman-Kazhdan Proposal, Borel Conjecture}

\date{\today}
\subjclass[2010]{Primary 11F66, 22E50; Secondary 11F70}

\thanks{The research of this paper is supported in part by the NSF Grant DMS-2200890.}

\begin{abstract}
In the Braverman-Kazhdan proposal (\cite{BK00}) and certain refinement of Ng\^o (\cite{Ngo20}) for automorphic $L$-functions, the reductive group $G$ and the representations $\rho$ of the Langlands dual group $G^\vee$ are taken with certain assumptions. We introduce the notion of the Braverman-Kazhdan-Ng\^o triples 
$(G,G^\vee,\rho)$ and show that for general automorphic $L$-functions, it is enough to consider the Braverman-Kazhdan-Ng\^o triples (Theorem \ref{thm:localm}). We also verify that for a given Braverman-Kazhdan-Ng\^o triple, 
the reductive monoid constructed from the Vinberg method and that constructed from the Putcha-Renner method are isomorphic (Theorem \ref{thm:monoids}). 
\end{abstract}

\maketitle
\tableofcontents
   
\section{Introduction}

Let $G$ be a split reductive algebraic group defined over a number field $k$ and $G^\vee$ be the Langlands dual group of $G$. For an irreducible finite-dimensional representation $(\rho,V_\rho)$ of $G^\vee(\BC)$ and an irreducible cuspidal automorphic representation $\pi$ of $G(\BA)$, where $\BA$ is the ring of adeles of $k$, R. Langlands introduced in \cite{L70} the automorphic $L$-function 
$L(s,\pi,\rho)$ associated with $(\pi,\rho)$ and proved that $L(s,\pi,\rho)$ can be defined as an Euler product of local $L$-factors 
\[
L(s,\pi,\rho)=\prod_{\nu\in|k|}L_\nu(s,\pi_\nu,\rho)
\]
when $s\in\BC$ with $\Re(s)$ sufficiently positive, where $|k|$ is the set of all local places of $k$, $\pi=\otimes_\nu\pi_\nu$ (the restricted tensor product decomposition), and 
the local $L$-factors $L_\nu(s,\pi_\nu,\rho)$ can be defined through the local Langlands conjecture. It is well known that the local $L$-factors are well-defined when $\nu$ is archimedean or any finite local place at which $\pi_\nu$ is unramified. The Langlands conjecture predicts that $L(s,\pi,\rho)$ admits a meromorphic continuation to 
$s\in\BC$ and satisfies the standard global functional equation. One of the central problems in the theory of automorphic forms and in the Langlands program is to prove the Langlands conjecture for general automorphic $L$-functions $L(s,\pi,\rho)$. 

The Langlands-Shahidi method and the Rankin-Selberg method established the Langlands conjecture for long lists of cases. However, the general situation of the Langlands conjecture is still widely open. Around the year 2000, A. Braverman and D. Kazhdan (\cite{BK00}) proposed a framework to establish the Langlands conjecture for general $L$-functions $L(s,\pi,\rho)$ via local and global harmonic analysis on $G$, which generalizes 
the method of J. Tate's thesis and the work of R. Godement and H. Jacquet to the great generality. In the Braverman-Kazhdan proposal \cite{BK00} and in its refinement by B. C. Ng\^o in \cite{Ngo20}, the triples $(G,G^\vee,\rho)$ 
should satisfy the following properties.

\begin{dfn}[Braverman-Kazhdan-Ngo Triple]\label{dfn:BKN}
Let $k$ be a field of characteristic zero. Let $G$ be a reductive $k$-split algebraic group, $G^\vee$ be the Langlands dual group, which is also a reductive $k$-split algebraic group, and $\rho$ be an irreducible finite-dimensional $k$-rational representation of $G^\vee$. 
A given triple $(G, G^\vee,\rho)$ is called a {\bf Braverman-Kazhdan-Ngo triple}, or simply, the {\bf BKN-triple} if it satisfies the following conditions. 
\begin{enumerate}
    \item The kernel of $\rho\colon G^\vee\to\GL_{d_\rho}$ is trivial, where $d_\rho$ is the dimension of $\rho$.
    \item There exists a character $\Fd\colon G\to\BG_m$, such that 
the following exact sequence
\[
1\longrightarrow G_0 \longrightarrow G \stackrel{\Fd}{\longrightarrow} \BG_m\longrightarrow 1
\]
holds with $G_0=[G,G]$ the derived group of $G$. 
\item The derived group $G_0=[G,G]$ is simply connected. 
\item The dual morphism $\Fd^\vee\colon \BG_m\to G^\vee$ composed with the given representation $\rho$ is a scalar multiplication of $\BG_m$ on the vector space $V_\rho$, i.e., the identity 
$\rho(\Fd^\vee(a))=a\cdot\RI_{V_\rho}$
holds for any $a\in\BG_m(\BC)=\BC^\times$.
\end{enumerate}
The {\rm BKN}-triple is denoted by 
\begin{align}\label{BKT}
    \Delta_{\bkn}(\Fd):=(G,G^\vee,\rho)_\Fd=(G_\Fd,G_\Fd^\vee,\rho^\Fd).
\end{align}
\end{dfn}

Note that although the Braverman-Kazhdan proposal can be formulated for general reductive groups defined over a number 
filed $k$, for discussion in this paper, we assume that $G$ is $k$-split. 

The local conjecture in the Braverman-Kazhdan proposal for any local filed $F$ of characteristic zero can be stated as follows. Some more detailed comments on this conjecture can be found in \cite[Section 4]{Ngo20} and \cite[Section 2.1]{LN24}.

\begin{cnj}[Braverman-Kazhdan]\label{cnj:LBK}
     Let $F$ be a local field of characteristic zero and $\Delta_{\bkn}(\Fd)=(G,G^\vee,\rho)_\Fd$ be a {\rm BKN}-triple with $G$ being $F$-split. Let $\pi$ be an irreducible admissible representation of $G(F)$, which is of the Casselman-Wallach type if $F$ is archimedean. 
     Then the following statements hold. 
\begin{enumerate}
    \item {\bf Local Zeta Integral:} There exists a Schwartz space $\CS_\rho(G(F))$ with 
    \[
    \CC_c^\infty(G(F))\subset \CS_\rho(G(F))\subset \CC^\infty(G(F))
    \]
    such that the local zeta integral
    \[
    \ScZ(s,\varphi_\pi,\phi)=\int_{G(F)}\varphi_\pi(g)\phi(g)|\Fd(g)|_F^{s+\frac{n_\rho-1}{2}}\ud g,\quad 
    {\rm for}\ \varphi_\pi\in\CC(\pi)\ {\rm and}\ \phi\in\CS_\rho(G(F))
    \]
    converges absolutely for $\Re(s)$ sufficiently positive and admits a meromorphic continuation to $s\in\BC$, where 
    $n_\rho$ is the normalization as given in \cite{Ngo20}, $\CC(\pi)$ is the space of matrix coefficients of $\pi$, 
    $\CC^\infty(G(F))$ is the space of smooth functions on $G(F)$ and $\CC_c^\infty(G(F))$ is its subspace of compactly supported functions.
    \item {\bf Local $L$-Factor:} The local zeta integrals $\ScZ(s,\varphi_\pi,\phi)$ are holomorphic mulfiples of 
    the local Langlands $L$-factor $L(s,\pi,\rho)$. In the $p$-adic case, the space
    \[
    \ScI(\pi)=\{\ScZ(s,\varphi_\pi,\phi)\mid \varphi_\pi\in\CC(\pi)\ {\rm and}\ \phi\in\CS_\rho(G(F))\}
    \]
    is a non-zero fractional ideal of $\BC[q^s,q^{-s}]$ generated by $L(s,\pi,\rho)$, 
    where $q$ is the cadinarity of the residue field of $F$. In the archimedean case, for any vertical strip 
    \[
    S_{a,b}=\{s\in\BC\mid a<\Re(s)<b\}
    \]
    with $a<b$, if $p(s)\in\BC[s]$ is such polynomials that $p(s)L(s,\pi,\rho)$ is bounded on $S_{a,b}$,  
    with small neighborhoods at the possible poles of the $L$-function $L(s,\pi,\rho)$ removed,
    then $p(s)\ScZ(s,\varphi_\pi,\phi)$ are also bounded on $S_{a,b}$, with small neighborhoods at the possible poles of the $L$-function $L(s,\pi,\rho)$ removed, for any $\varphi_\pi\in\CC(\pi)$ and $\phi\in\CS_\rho(G(F))$. 
    \item {\bf Basic Function:} There is a basic function $\BL_\rho\in\CS_\rho(G(F))$ such that when $\pi$ is unramified with a normalized zonal spherical function $\varphi_\pi^\circ\in\CC(\pi)$, the identity 
    \[
    \ScZ(s,\varphi_\pi^\circ,\BL_\rho)=L(s,\pi,\rho).
    \]
    \item {\bf Fourier Transform:} There exists an invariant distribution $\Phi_{\rho,\psi}$ on $G(F)$ with a non-trivial additive character $\psi$ of $F$ such that the convolution operator 
    \[
    \ScF_{\rho,\psi}(\phi)(g)=|\Fd(g)|_F^{-n_\rho}(\Phi_{\rho,\psi}*\phi^\vee)(g),\quad {\rm with}\ \phi\in\CS_\rho(G(F))\ {\rm and}\ g\in G(F)
    \]
    defines a Fourier transform on $\CS_\rho(G(F))$, which enjoys the following properties: 
    $\ScF_{\rho,\phi}(\BL_\rho)=\BL_\rho$, $\ScF_{\rho,\psi}\circ\ScF_{\rho,\psi^{-1}}=\RI$, the identity operator, and 
    $\ScF_{\rho,\psi}$ extends to a unitary operator on $L^2(G(F),\ud_\rho g)$, 
    where $\ud_\rho(g)=|\Fd(g)|_F^{n_\rho}\ud g$ and $\phi^\vee(g)=\phi(g^{-1})$.
    \item {\bf Local $\gam$-Factor:} The local functional equation 
    \[
    \ScZ(1-s,\varphi_\pi^\vee,\ScF_{\rho,\psi}(\phi))
    =\gam(s,\pi,\rho,\psi)\ScZ(s,\varphi_\pi,\phi) ,\quad 
    {\rm for}\ \varphi_\pi\in\CC(\pi)\ {\rm and}\ \phi\in\CS_\rho(G(F)),
    \]
    holds with the Langlands local $\gam$-factor $\gam(s,\pi,\rho,\psi)$, as meromorphic functions in $s\in\BC$.
\end{enumerate}
\end{cnj}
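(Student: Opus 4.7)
The plan is to reduce everything to the geometry of the reductive monoid $M_\rho$ attached to the BKN-triple $\Delta_{\bkn}(\Fd)$ provided by Theorem \ref{thm:monoids}. Define $\CS_\rho(G(F))$ as the space of smooth functions on $G(F)$ obtained by restricting Schwartz sections on $M_\rho(F)$: in the $p$-adic case, bi-$K$-finite functions supported on $\Fd^{-1}(\CO_F)$ with controlled behavior along the singular boundary divisor $\Fd^{-1}(0)$; in the archimedean case, smooth functions with Schwartz decay along the fibration $\Fd\colon M_\rho(F) \to F$. The bi-$G(F)$-equivariance of $M_\rho$ and the extension of $\Fd$ to a global map on $M_\rho$ guarantee that this definition is intrinsic to the triple.

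I would begin with the unramified $p$-adic case, which drives the rest. Define the basic function $\BL_\rho$ as the bi-$K$-invariant function on $G(F)$ whose Satake transform equals
\[
\det\!\bigl(1 - q^{-s-(n_\rho-1)/2}\,\rho(\sigma_\pi)\bigr)^{-1},
\]
or equivalently as the trace-of-Frobenius function of the intersection cohomology sheaf $\mathrm{IC}_{M_\rho}$ restricted to $G(F)$. A Macdonald-formula calculation then yields part (3), and absolute convergence in part (1) follows because any $\phi \in \CS_\rho(G(F))$ is dominated by a translate of $\BL_\rho$ in the $p$-adic case; the archimedean analog requires in addition Harish-Chandra moderate growth estimates on matrix coefficients of Casselman--Wallach representations.

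The central obstacle is part (4): constructing the Fourier kernel $\Phi_{\rho,\psi}$ together with the self-duality $\ScF_{\rho,\psi}(\BL_\rho) = \BL_\rho$, the involutive property $\ScF_{\rho,\psi} \circ \ScF_{\rho,\psi^{-1}} = \RI$, and the $L^2$-unitarity on $L^2(G(F),\ud_\rho g)$. Following the blueprint of Ng\^o (\cite{Ngo20}), I would realize $\Phi_{\rho,\psi}$ via spectral inversion of $\gamma(s,\pi,\rho,\psi)$ against the tempered dual of $G(F)$, proving convergence by a Hankel-type stationary-phase estimate on the Harish-Chandra Plancherel measure. The fixed-point property would then reduce to the Macdonald formula, involutivity to the $\gamma$-factor identity $\gamma(s,\pi,\rho,\psi)\gamma(1-s,\pi^\vee,\rho,\psi^{-1}) = 1$, and $L^2$-unitarity to the tempered bound $|\gamma(\tfrac12 + it,\pi,\rho,\psi)| = 1$. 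With (4) in place, (5) follows formally from Fubini and the convolutional definition of $\ScF_{\rho,\psi}$; (2) follows from the Bernstein argument on finitely generated fractional ideals in $\BC[q^s,q^{-s}]$ combined with (5); and the meromorphic continuation in (1) is obtained from the functional equation by analytic continuation beyond the region of absolute convergence.
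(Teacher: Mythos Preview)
The statement you are attempting to prove is not a theorem in the paper: it is \emph{Conjecture}~\ref{cnj:LBK}, explicitly labeled and presented as the local Braverman--Kazhdan conjecture. The paper does not prove it, nor does it claim to; the conjecture is stated only to motivate the paper's actual results (Theorems~\ref{thm:localm} and~\ref{thm:monoids}), which concern the reduction of general $L$-triples to BKN-triples and the comparison of two monoid constructions. There is therefore no ``paper's own proof'' to compare against.

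What you have written is not a proof but a research outline --- essentially a summary of the strategy sketched in \cite{BK00} and \cite{Ngo20}, together with some heuristic expectations. Each of the steps you list (constructing the Schwartz space from the monoid geometry, defining the basic function via IC sheaves, producing the Fourier kernel by spectral inversion of the $\gamma$-factor, deducing unitarity from tempered bounds) is a substantial open problem in its own right, and several are known only in special cases such as the Godement--Jacquet setting or the doubling construction. In particular, the ``central obstacle'' you identify in part~(4) is genuinely the heart of the conjecture, and your proposed resolution via stationary-phase estimates on the Plancherel measure is precisely the kind of analysis that has not been carried out in general. Presenting this as a proof would be misleading: the conjecture remains open.
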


It is clear that the $\bkn$-triples are the right set-up to extend the Godement-Jacquet zeta theory to such a great generality.  On the other hand, it is also clear that not every triple in the definition of the Langlands automorphic $L$-functions satisfies the conditions in Definition \ref{dfn:BKN}. Without loss of generality, 
in terms of automorphic $L$-functions, we may introduce the notion of $L$-triples. 

\begin{dfn}[$L$-Triple]\label{dfn:LT}
Let $k$ be a field of characteristic zero. Let $G$ be a reductive $k$-split algebraic group, $G^\vee$ be the Langlands dual group, which is also a reductive $k$-split algebraic group, and $\rho$ be an irreducible finite-dimensional $k$-rational representation of $G^\vee$. The triple $(G,G^\vee,\rho)$ is called an {\bf $L$-triple} if the restriction of 
$\rho$ to any essentially simple component of the derived group of $G^\vee$ is non-trivial. 
\end{dfn}

In the notes, we are going to show that in terms of the Langlands local $L$-factors, the BKN-triples catch the great generality. 
More precisely, for any given $L$-triple $(G,G^\vee,\rho)$ as in Definition \ref{dfn:LT}, one may construct a canonical triple $\Delta_\fp=(H_\rho,H_\rho^\vee,\rho_\fp)_{\Fd_\fp}$ via a canonical fiber product construction associated to $(G,G^\vee,\rho)$, such that the following holds. 

\begin{thm}\label{thm:localm}
Let $F$ be a local field of characteristic zero. 
For any given $L$-triple $(G,G^\vee,\rho)$ with $G$ being $F$-split, there is a {\rm BKN}-triple  
$\Delta_{\bkn}(\Fd)=(G_\Fd,G_\Fd^\vee,\rho^\Fd)$ such that 
for any given irreducible admissible representation $\pi$ of 
    $G(F)$, there exists an irreducible admissible representation $\sig$ of $G_\Fd(F)$ such that 
    the identification of local $L$-factors:
    \[
    L(s,\pi,\rho)=L(s,\sig,\rho^\Fd)
    \]
    holds as meromorphic functions in $s\in\BC$. Note that when $F$ is archimedean, the representations 
    $\pi$ and $\sig$ are of the Casselman-Wallach type. The local $L$-factors are defined by assuming that the local 
    Langlands conjecture for both triples over $F$.
\end{thm}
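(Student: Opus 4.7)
The plan is to construct a canonical BKN-triple $\Delta_{\bkn}(\Fd)=(G_\Fd,G_\Fd^\vee,\rho^\Fd)$ from the $L$-triple $(G,G^\vee,\rho)$ on the dual side, and then to transfer representations through the local Langlands correspondence (available by the theorem's standing assumption) so that the $V_\rho$-valued representation of $W_F\times\SL_2(\BC)$ attached to $\pi$ is preserved.

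For the construction, I would set
\[
G_\Fd^\vee:=\rho(G^\vee)\cdot\BG_m\subseteq\GL(V_\rho),
\]
the subgroup generated by the image of $\rho$ together with the scalar center of $\GL(V_\rho)$, with $\rho^\Fd$ the tautological embedding. Conditions (1) and (4) of Definition \ref{dfn:BKN} are then immediate. For conditions (2) and (3), Schur's lemma applied to the irreducible action of $\rho(G^\vee)$ on $V_\rho$ forces the centralizer of $\rho(G^\vee)$ in $\GL(V_\rho)$ to be the scalar $\BG_m$, so $Z(G_\Fd^\vee)=\BG_m$ is a connected torus; by the standard dictionary this is equivalent to $[G_\Fd,G_\Fd]$ being simply connected, yielding (3), and $G_\Fd/[G_\Fd,G_\Fd]=\BG_m$ gives the character $\Fd$ of (2). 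The resulting $G_\Fd$ is identified with the canonical fiber product $\Delta_\fp$ referenced in the introduction.

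Given an irreducible admissible representation $\pi$ of $G(F)$, let $\varphi_\pi\colon W_F\times\SL_2(\BC)\to G^\vee$ be an $L$-parameter of $\pi$. Its composition with $\rho$ factors through $\rho(G^\vee)\subseteq G_\Fd^\vee$, so setting $\varphi_\sigma:=\rho\circ\varphi_\pi$, viewed as $G_\Fd^\vee$-valued, defines an $L$-parameter for $G_\Fd(F)$; I take $\sigma$ to be any member of the corresponding $L$-packet, which is of Casselman-Wallach type in the archimedean case. By construction $\rho^\Fd\circ\varphi_\sigma=\rho\circ\varphi_\pi$ as $V_\rho$-valued representations, and since the local Langlands $L$-factor is determined entirely by this common composition, the identity
\[
L(s,\pi,\rho)=L(s,\sigma,\rho^\Fd)
\]
holds as meromorphic functions in $s$.

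The main obstacle is arranging all four BKN conditions simultaneously and identifying $G_\Fd$ canonically with the fiber product $\Delta_\fp$. The crucial input is the $L$-triple hypothesis---non-triviality of $\rho$ on every essentially simple factor of $[G^\vee,G^\vee]$---which is exactly what prevents pathological central collapse in $\rho(G^\vee)$, so that $Z(G_\Fd^\vee)=\BG_m$ is forced and the simple connectedness of $[G_\Fd,G_\Fd]$ really follows. A secondary point to verify is that the lifted parameter $\varphi_\sigma$ is well-defined as a $G_\Fd^\vee$-valued parameter, which amounts to the containment $\rho(G^\vee)\subseteq G_\Fd^\vee$ built into the construction.
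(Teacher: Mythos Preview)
Your approach is correct and arrives at the same group as the paper, but the route is genuinely different in how the BKN conditions are verified.

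\textbf{Comparison.} The paper defines $H_\rho^\vee$ as the abstract fiber product $\GL_{d_\rho}\times_{\PGL_{d_\rho}}(G^\vee)_\ad$, then proves (Proposition~2.2) that $\rho_\fp$ is injective, computes the root datum explicitly, and obtains the concrete isomorphism $H_\rho\cong\BG_m\ltimes_\lambda G_0^{\rsc}$; conditions (2) and (3) then drop out of this description. Your subgroup $G_\Fd^\vee=\rho(G^\vee)\cdot\BG_m\subset\GL(V_\rho)$ is literally the image of the paper's $\rho_\fp$, hence the same group once the $L$-triple hypothesis makes $\rho_\fp$ injective. Your verification via Schur's lemma plus the duality fact ``$Z(H^\vee)$ connected $\Leftrightarrow$ $[H,H]$ simply connected, and then $H/[H,H]$ is the torus dual to $Z(H^\vee)$'' is slicker and avoids the root-datum calculation, but yields less: the paper's explicit semidirect-product model of $H_\rho$ is reused later in Section~3 for the monoid comparison, so their longer argument is not wasted effort. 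The $L$-factor step (composing $\varphi_\pi$ with the map into the new dual group) is identical in both treatments.

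\textbf{One correction.} Your final paragraph locates the $L$-triple hypothesis in the Schur/center argument, but in fact your Schur argument gives $Z(G_\Fd^\vee)=\BG_m$ unconditionally, since $\rho$ remains irreducible on its image regardless of whether some simple factor is killed. The place the $L$-triple hypothesis is genuinely needed is only in identifying your $G_\Fd^\vee$ with the paper's fiber product $H_\rho^\vee$: without it, the fiber product retains all simple factors of $(G^\vee)_\ad$ while your image group $\rho(G^\vee)\cdot\BG_m$ forgets those on which $\rho$ is trivial, so $\rho_\fp$ acquires a kernel and the two constructions diverge. Since the theorem only asserts existence of \emph{some} BKN-triple, your argument actually proves the statement even without the $L$-triple assumption; you should adjust the commentary accordingly. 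It would also strengthen the write-up to give a reference for the ``standard dictionary'' step, as that equivalence, while true, is not entirely trivial.
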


Theorem \ref{thm:localm} is one of the main results of this paper, which will be proved in Section \ref{sec-BKNT}, 
with the BKN-triple assertion proved in Section \ref{ssec-BKNT-FP} (Theorem \ref{thm:BKNT}) and the local $L$-factor assertion proved in Section \ref{ssec-LLF}. With the help of the Borel conjecture (\cite{Bor79,AP19, Sol20}), 
one can make the representation $\sig$ more explicitly in terms of the given representation $\pi$ in Theorem \ref{thm:localm}, which will be discussion in Section \ref{sec-BC}. Theorem \ref{thm:localm} suggests 
that in order to study the Langlands conjecture on general automorphic $L$-functions via global zeta integral methods, 
which include the Rankin-Selberg method, the Langlands-Shahidi method, the Miller-Schmid method, and the Bravermen-Kazhdan-Ng\^o method and so on, the harmonic analysis can be taken over the BKN-triples that have the extra nice structures as in Definition \ref{dfn:BKN}. This will be further explained by a series of examples in Section \ref{sec-Ex}, and in our forthcoming work. 

As explained in \cite{BK00, Ngo20, BNS16}, the Braverman-Kazhdan-Ng\^o approach to the Langlands conjecture for automorphic $L$-functions and other related geometric problems needs a good understanding of the $L$-monoids associated with the group $G$. For a given BKN-triple $(G,G^\vee,\rho)_\Fd$, one has two different constructions 
of $L$-monoids: one is from the Vinberg method (\cite{Vin95, Ngo, Ngo14}), which is denoted by $\CM_\rho$, and the other is from the general theory of M. Putcha and L. Renner (\cite{Ren05}), which is denoted by $\CM^\rho$. Note that 
the two methods construct the monoids with $G$ as the unit groups under the different assumptions on the structure of the algebraic groups $G$. When $G$ is from a BKN-triple, the two methods work and hence we have both $\CM_\rho$ and 
$\CM^\rho$. It is important to know whether $\CM_\rho\cong\CM^\rho$ since both have different geometric features. 

\begin{thm}\label{thm:monoids}
    For any given $\bkn$-triple $\Delta_{\bkn}(\Fd):=(G,G^\vee,\rho)_\Fd$, let $\CM_\rho$ be the $L$-monoid as constructed from the Vinberg method and let $\CM^\rho$ be the $L$-monoid as constructed from the Putcha-Renner method. Then the two reductive monoids are isomorphic, i.e., $\CM_\rho\cong\CM^\rho$. 
\end{thm}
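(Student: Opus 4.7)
The plan is to identify both monoids as normal affine reductive monoids with the common unit group $G$, and to show that they correspond to the same combinatorial datum under the classification of such monoids, forcing them to be isomorphic. The strategy proceeds in three steps.

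First, I would invoke the classification theorem of Putcha-Renner-Rittatore for normal affine reductive monoids with a given unit group $G$. Because the BKN-triple hypothesis (Definition \ref{dfn:BKN}) forces $G_0=[G,G]$ to be simply connected and places $G$ in an exact sequence $1\to G_0\to G\xrightarrow{\Fd}\BG_m\to 1$, the group $G$ is of the ``Vinberg type'' for which such monoids $M$ (with a zero) are completely determined, up to isomorphism, by their cocharacter cone
\[
X_*(T)^+_M:=\{\lam\in X_*(T)\mid \lam\colon\BG_m\to T\ \text{extends to}\ \BA^1\to M\},
\]
a Weyl-group invariant, rational polyhedral, strictly convex cone in $X_*(T)_\BR$, where $T\subset G$ is a maximal torus.

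Second, I would compute $X_*(T)^+_{\CM_\rho}$ for the Vinberg monoid. Following the Vinberg-Ng\^o construction (\cite{Vin95, Ngo, Ngo14}) adapted to the BKN-triple, the enveloping monoid $\CM_\rho$ is built so that a cocharacter $\lam$ extends to $\BA^1\to\CM_\rho$ precisely when $\pair{\mu,\lam}\ge 0$ for every weight $\mu$ of $\rho$, where weights of $\rho$ of $G^\vee$ are identified with cocharacters of the dual torus and hence with characters of $T$ via the Langlands duality. The compatibility condition $\rho(\Fd^\vee(a))=a\cdot\RI_{V_\rho}$ ensures that the scaling cocharacter $\Fd^\vee$ sits inside this cone and coincides with the map to $\BG_m\hookrightarrow\BA^1$, providing the grading of $\CM_\rho$ by $\Fd$.

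Third, I would compute the cocharacter cone for the Putcha-Renner monoid $\CM^\rho$. The Putcha-Renner theory (\cite{Ren05}) classifies reductive monoids via the type map, or equivalently via the ``polyhedral root system'' on $X_*(T)$; the input data in the BKN setting is exactly the weight system of $\rho$ (transported to characters of $T$), since the abelianization $\Fd$ and the scaling compatibility of $\Fd^\vee$ in Definition \ref{dfn:BKN} dictate how $\BG_m$ acts on the monoid. A direct comparison then shows that the extendable cocharacters are again those $\lam$ pairing non-negatively with all weights of $\rho$, so that $X_*(T)^+_{\CM^\rho}=X_*(T)^+_{\CM_\rho}$. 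By the classification result recalled in the first step, this equality of cones forces $\CM_\rho\cong\CM^\rho$ as reductive monoids over $G$.

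The main obstacle is the second of these identifications: making the Putcha-Renner input data precise in the BKN setting and verifying that the combinatorics really does reduce to the weights of $\rho$. The Putcha-Renner construction takes place intrinsically on the group $G$ (not on its dual), whereas $\rho$ lives on $G^\vee$; bridging these requires carefully tracking the duality between characters of $T^\vee$ and cocharacters of $T$, and using the exact sequence involving $\Fd$ together with the simple-connectedness of $G_0$ to ensure that no extraneous lattice quotients appear. Once these bookkeeping issues are resolved, both constructions are pinned down by the same Weyl-invariant cone generated by the weights of $\rho$, and the isomorphism $\CM_\rho\cong\CM^\rho$ follows.
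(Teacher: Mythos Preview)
Your overall strategy matches the paper's: both proofs reduce to the Putcha--Renner/Rittatore classification (the paper invokes \cite[Theorem~5.4]{Ren05}), compute the toric datum attached to each monoid, and show the two data coincide. The paper works with character cones of the torus closure rather than cocharacter cones, but that is a dual reformulation of the same idea.

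Where your proposal falls short is in locating the actual work. You assert in step~2 that the Vinberg monoid $\CM_\rho$ is ``built so that a cocharacter $\lam$ extends to $\BA^1\to\CM_\rho$ precisely when $\pair{\mu,\lam}\ge 0$ for every weight $\mu$ of $\rho$.'' But this is precisely the content of the theorem, not a feature of the construction. In the paper's setup (Section~\ref{ssec-CVM}), $\CM_\rho$ is defined as a fiber product of the Vinberg \emph{universal} monoid $\CM^+$---built from the fundamental representations $\rho_\omega$ of $G_0$, not from $\rho$---along the map $\lambda\colon\BA^1\to\prod_{\alpha\in\Delta}\BA^1$. The resulting cone $\xi_\lambda$ is generated by $(1,0)$ and by the vectors $(\langle\omega'-w_0\omega,\lambda\rangle,\omega')$ as $\omega'$ ranges over weights of the $\rho_\omega$; there is no a~priori reason this equals the dual $\xi(\lambda)^\vee$ of the cone spanned by the weights of $\rho$. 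The paper proves this equality as a separate Proposition, using that every weight of a highest-weight representation lies in the convex hull of the Weyl orbit of the highest weight, together with the identity $-w_0\Delta=\Delta$.

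Conversely, the Putcha--Renner side (your step~3), which you flag as the ``main obstacle,'' is essentially immediate: by construction (Section~\ref{ssec-CPRM}) the torus closure in $\CM^\rho$ has coordinate ring $k[\xi(\rho)^\vee]$. So you have the difficulty backwards. Your outline is sound, but a complete proof requires carrying out the cone computation on the Vinberg side and then the comparison $\xi_\lambda=\xi(\lambda)^\vee$.
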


The proof of Theorem \ref{thm:monoids} will be given in Section \ref{ssec-PThmM}. It is clear that 
Theorem \ref{thm:monoids} provides a crucial geometric base for further harmonic analysis over the $L$-monoids for automorphic $L$-functions and other related problems (\cite{BK00, Ngo20, BNS16}). 

\quad

This paper is organized as follows. Section \ref{sec-BKNT} devotes to the construction of the BKN-triple associated with any given $L$-triple and proves Theorem \ref{thm:localm}. Section \ref{sec-LMBKN} is to discuss the relation between the monoid constructed via the Putcha and Renner theory and the monoid constructed through the Vinberg universal monoid for any given BKN-triple. In Section \ref{sec-Ex}, we present a series of examples of the BKN-triples. Finally, in Section \ref{sec-BC}, we discuss the Borel conjecture on the local $L$-packets, which are important for us to understand the known construction of global zeta integrals for families of $L$-functions.

\section{Braverman-Kazhdan-Ng\^o Triple}\label{sec-BKNT}

Let $k$ be a field of characteristic zero, which can be a number field or a local field of characteristic zero as 
considered in this paper. Let $G$ be a reductive algebraic group that is $k$-split. Take $G^\vee$ to be the Langlands dual group that is considered also as a reductive $k$-split algebraic group. Let $\rho$ be a finite-dimensional 
$k$-rational representation of $G^\vee$. For a given $L$-triple $(G, G^\vee,\rho)$, Section \ref{ssec-BKNT-FP} shows that 
a triple defined via the fiber product associated with $(G, G^\vee,\rho)$ is a BKN-triple (Theorem \ref{thm:BKNT}). 
Note that for the consideration of the Langlands $L$-functions, we usually take $\rho$ to 
be a representation of $G^\vee(\BC)$. However, for the construction of the BKN-triple from the given $L$-triple 
it may be more convenient to take $\rho$ to be a rational representation of $G^\vee$, especially, when we assume that 
$G$ and $G^\vee$ are split reductive groups over $k$. Following the spirit of the relative Langlands duality program of D. Ben-Zvi, Y. Sakellaridis, and A. Venkatesh (\cite{BZSV}), such a set-up has its own advantage. 

For the given triple over $k$, let $T$ be a maximal $k$-split torus of $G$ and $T^\vee$ be its dual, which is a maximal $k$-split torus of $G^\vee$. Let $X^*(T)$ be the $F$-rational characters of $T$ and $X_*(T)$ be the $F$-rational co-characters of $T$. Then we have that $X^*(T)=X_*(T^\vee)$ and $X^*(T^\vee)=X_*(T)$. Let $\Phi=\Phi(G,T)$ 
be the set of roots of $G$ with respect to $T$ and $\Phi^\vee=\Phi(G^\vee,T^\vee)$ be the set of roots of $G^\vee$ with respect to $T^\vee$, which is dual to $\Phi$. Let $(X,\Phi;X^\vee,\Phi^\vee)$ be the root data as in \cite{Spr98}.
Let $(B,T)$ be a fixed Borel pair of $G$ that determines the subset $\Phi^+$ of positive roots in $\Phi$. 

\subsection{BKN-triple via fiber product}\label{ssec-BKNT-FP}
For a given $L$-triple $(G,G^\vee,\rho)$ as in Definition \ref{dfn:LT}, in order to prove Theorem \ref{thm:localm}, we are going to construct a BKN-triple 
\[
\Delta_{\bkn}(\Fd)=(G_\Fd,G_\Fd^\vee,\rho^\Fd)
\]
via the fiber product associated to $(G,G^\vee,\rho)$. 
We start with a construction of $G_\Fd^\vee$ and consider the following fiber product diagram:
\begin{align}\label{diag-fp1}
    \begin{matrix}
        G^\vee&\stackrel{\rho}\longrightarrow &\GL_{d_\rho}\\
        \downarrow\scriptstyle{\pr}&& \downarrow\scriptstyle{\pr}\\
        (G^\vee)_\ad&\stackrel{\ol{\rho}}\longrightarrow &\PGL_{d_\rho}
    \end{matrix},
\end{align}
where $d_\rho$ is the dimension of the representation $\rho$ and $\ol\rho$ is the canonical morphism induced 
from $\rho$ via the canonical projection. 
Let $H_\rho^\vee$ be the fiber product of $\GL_{d_{\rho}}$ and $(G^{\vee})_{\ad}$ over $\PGL_{d_{\rho}}$ defined by the diagram in \eqref{diag-fp1}, which is the universal object fits into the following commutative diagram:
\begin{align}\label{diag-fp2}
    \begin{matrix}
        & &\GL_{d_\rho}& &\\
        &&&&\\
        &\nearrow\scriptstyle{\rho}&\uparrow\scriptstyle{\rho_\fp}&\scriptstyle{\pr}\searrow&\\
        &&&&\\
        G^\vee&\stackrel{\eta_\fp^\vee}\longrightarrow&H_\rho^\vee&&\PGL_{d_\rho}\\
        &&&&\\
        &\scriptstyle{\pr}\searrow&\downarrow\scriptstyle{\pr}&\nearrow\scriptstyle{\ol{\rho}}&\\
        &&&&\\
        & &(G^\vee)_\ad&&
    \end{matrix}
\end{align}
The morphisms 
\[
\eta_\fp^\vee\colon G^\vee\longrightarrow H_\rho^\vee,\quad\rho_\fp\colon H_\rho^\vee\longrightarrow\GL_{d_\rho}, \quad {\rm and}\quad \pr\colon H^\vee_\rho\longrightarrow (G^\vee)_\ad
\]
are given by the universality of the fiber product, so that \eqref{diag-fp2} is a commutative diagram. 
Note that Diagram \eqref{diag-fp2} also implies the following diagram:
\begin{align}\label{diag-fp3}
    \begin{matrix}
    1&\longrightarrow&\BG_m&\stackrel{\Fd_\fp^\vee}\longrightarrow&H_\rho^\vee&\stackrel{\pr}\longrightarrow&(G^\vee)_\ad&\longrightarrow&1\\
    &&&&&&&&\\
    &&\downarrow\scriptstyle{=}&&\downarrow\scriptstyle{\rho_\fp}&&\downarrow\scriptstyle{\ol{\rho}}&\\
    &&&&&&&&\\
    1&\longrightarrow&\BG_m&\longrightarrow&\GL_{d_\rho}&\stackrel{\pr}\longrightarrow&
    \PGL_{d_\rho}&\longrightarrow&1\\
        \end{matrix}
\end{align}
which implies that the composition $\rho_\fp\circ\Fd_\fp^\vee(z)=z\RI_{d_\rho}$ for any $z\in\BG_m$, which is 
Condition (4) of Definition \ref{dfn:BKN}.

Now we take $H_\rho$ to be the Langlands dual of $H_\rho^\vee$ and it is equipped with a morphism 
\begin{align}\label{Fdfp}
    \Fd_\fp\colon H_\rho\longrightarrow\BG_m,
\end{align}
which is dual to $\Fd_\fp^\vee\colon \BG_m\longrightarrow H_\rho^\vee$ as in \eqref{diag-fp3}. In this way, we construct a triple 
which is denoted by 
\begin{align}\label{Dfp}
    \Delta_\fp=(H_\rho,H_\rho^\vee,\rho_\fp)_{\Fd_\fp}. 
\end{align}
In order to show that the constructed triple $\Delta_\fp$ is a BKN-triple, i.e. $\Delta_\fp=\Delta_\bkn(\Fd_\fp)$, it remains to show that the following 
exact sequence 
\begin{align}\label{es-1}
    1\longrightarrow H_\rho^\circ\longrightarrow H_\rho\stackrel{\Fd_\fp}\longrightarrow\BG_m\longrightarrow1
\end{align}
holds with the kernel $H_\rho^\circ$ of the morphism $\Fd_\fp$ equals the derived group of $H_\rho$, i.e. $H_\rho^\circ=[H_\rho,H_\rho]$, 
the kernel $\ker(\rho_\fp)$ is trivial, and $H_\rho^\circ$ is simply connected according to Definition \ref{dfn:BKN}. 

\begin{prp}\label{prp:kernelfp}
    With notations as given by the fiber product diagram in \eqref{diag-fp2}, the kernel of the morphism $\rho_\fp\colon H_\rho^\vee\longrightarrow\GL_{d_\rho}$ is trivial. 
\end{prp}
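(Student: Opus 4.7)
The plan is to use the fiber-product description of $H_\rho^\vee$ to identify $\ker(\rho_\fp)$ with $\ker(\ol\rho)\subset (G^\vee)_\ad$, and then to establish triviality of the latter using the irreducibility of $\rho$ together with the $L$-triple hypothesis. Concretely, I would describe $H_\rho^\vee$ as the set of pairs $(A,g)\in\GL_{d_\rho}\times(G^\vee)_\ad$ satisfying $\pr(A)=\ol\rho(g)$, with $\rho_\fp$ being projection to the first factor. Then $\ker(\rho_\fp)=\{(I,g):\ol\rho(g)=1\}$, giving a canonical isomorphism $\ker(\rho_\fp)\cong\ker(\ol\rho)$. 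Since $\ol\rho$ is induced from $\rho$ by passage to the adjoint quotient (using Schur to see that $\rho(Z(G^\vee))\subset Z(\GL_{d_\rho})$), one has $\ker(\ol\rho)=\rho^{-1}(Z(\GL_{d_\rho}))/Z(G^\vee)$, so the proposition is equivalent to the equality $\rho^{-1}(Z(\GL_{d_\rho}))=Z(G^\vee)$.

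The main step is to prove $\ker(\rho)\subset Z(G^\vee)$. By the structure theorem for normal connected subgroups of a connected reductive group, $(\ker\rho)^{\circ}$ is an almost direct product of a subtorus of $Z(G^\vee)^{\circ}$ with a product of essentially simple factors of $[G^\vee,G^\vee]$. The $L$-triple hypothesis prevents any essentially simple factor from being contained in $\ker(\rho)$ (otherwise $\rho$ would restrict trivially to that component), so the semisimple part is trivial and $(\ker\rho)^{\circ}\subset Z(G^\vee)^{\circ}$. The finite quotient $\ker(\rho)/(\ker\rho)^{\circ}$ then lies in $Z(G^\vee/(\ker\rho)^{\circ})$, because the connected group $G^\vee/(\ker\rho)^{\circ}$ acts trivially by conjugation on the discrete set $\ker(\rho)/(\ker\rho)^{\circ}$. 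Combined with the general identity $Z(G/N)=Z(G)/N$ for a central subgroup $N$ of a connected reductive group $G$, this gives $\ker(\rho)\subset Z(G^\vee)$.

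With this centrality in hand, $\rho$ factors through a faithful irreducible representation $G^\vee/\ker(\rho)\hookrightarrow\GL_{d_\rho}$, and Schur's lemma yields $\rho^{-1}(Z(\GL_{d_\rho}))/\ker(\rho)=Z(G^\vee/\ker(\rho))$. Applying the identity $Z(G^\vee/N)=Z(G^\vee)/N$ once more (which one verifies by noting that for $g$ with $[g,h]\in N$ for all $h\in G^\vee$, the map $h\mapsto [g,h]$ is a morphism from the connected group $G^\vee$ into the finite group $N\cap[G^\vee,G^\vee]\subset Z([G^\vee,G^\vee])$, hence constantly equal to $1$) one obtains $\rho^{-1}(Z(\GL_{d_\rho}))=Z(G^\vee)$, completing the proof. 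The principal obstacle is the second paragraph: translating the $L$-triple hypothesis (non-vanishing on each essentially simple component) into the full containment $\ker(\rho)\subset Z(G^\vee)$ requires both the structure theorem for normal connected subgroups of a reductive group and a separate descent argument from the identity component to the full kernel.
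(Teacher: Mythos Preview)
Your proof is correct and complete. It takes a genuinely different route from the paper's argument, though both ultimately rest on the same key fact, namely that $\rho^{-1}(Z(\GL_{d_\rho}))=Z(G^\vee)$.

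The paper first invests considerable effort in proving an explicit isomorphism
\[
H_\rho^\vee\;\cong\;(\BG_m\times G^\vee)\big/\{(\lambda(z)^{-1},z)\mid z\in Z^\vee\}
\]
by verifying the universal property of the right-hand side, and only afterwards checks that the induced representation $\rho^\Fe\colon[a,g]\mapsto a\rho(g)$ has trivial kernel. You instead use the tautological description of the fiber product as pairs $(A,g)\in\GL_{d_\rho}\times(G^\vee)_\ad$ with $\pr(A)=\ol\rho(g)$, which immediately identifies $\ker(\rho_\fp)$ with $\ker(\ol\rho)$ and reduces the proposition to the equality $\rho^{-1}(Z(\GL_{d_\rho}))=Z(G^\vee)$. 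For the proposition alone, your path is shorter and more transparent. The paper's detour, however, is not wasted: the explicit presentation of $H_\rho^\vee$ is recorded as a corollary and used repeatedly later (for the root-datum computation of $H_\rho$, for identifying $H_\rho\cong\BG_m\ltimes_\lambda G_0^{\rsc}$, and in the monoid comparison).

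On the key fact itself, the paper simply asserts that $\rho(g)$ scalar forces $g\in Z^\vee$ ``since $(G,G^\vee,\rho)$ is an $L$-triple,'' whereas you supply a full structure-theoretic argument: normal connected subgroups of a reductive group are almost-direct products of a central torus with simple factors, the $L$-triple hypothesis excludes simple factors from $\ker(\rho)$, and the passage from $(\ker\rho)^\circ$ to $\ker\rho$ and then to $\rho^{-1}(Z(\GL_{d_\rho}))$ goes through the identity $Z(G^\vee/N)=Z(G^\vee)/N$. Your justification of that identity via the commutator map $h\mapsto[g,h]$ landing in the finite group $N\cap[G^\vee,G^\vee]\subset Z([G^\vee,G^\vee])$ is clean and works uniformly for any central $N$, not only finite ones. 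In this respect your write-up is more careful than the paper's at the one genuinely nontrivial point.
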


\begin{proof}
Assume that the irreducible representation $\rho=\rho_\lam$ of $G^\vee$ has its dominant weight $\lam\in X^*(T^\vee)$. 
Let $Z^\vee$ be the center of $G^\vee$. Then $\lam\colon Z^\vee\longrightarrow\BG_m$ is a character of $Z^\vee$. 
       We claim that
    \begin{align}\label{claim1}
        H_\rho^\vee\cong(\BG_m \times G^\vee) / \{ (\lambda(z)^{-1},z) \mid z \in Z^\vee \}=:
        \BG_m\times_{(\lam^{-1},\Id)(Z^\vee)}G^\vee.
    \end{align}

In order to prove the isomorphism in \eqref{claim1}, it suffices to show that the group on the right-hand side 
    $\BG_m\times_{(\lam^{-1},\Id)(Z^\vee)}G^\vee$ is also 
    a universal object for the fiber product diagram in \eqref{diag-fp1}. 
    First of all, we extend the representation $\rho$ of $G^\vee$ to that of the group $\BG_m \times G^\vee$ by 
    defining the scalar multiplication of $\BG_m$:
    \begin{align}\label{rhoe1}
        \rho^\Fe(a,g)=a\cdot\rho(g),\quad {\rm for}\ a\in\BG_m,\ {\rm and}\ g\in G^\vee.
    \end{align}
    When $(a,g)=(\lam^{-1}(z),z)\in\BG_m\times G^\vee$ for $z\in Z^\vee$, we have that 
    \[
    \rho^\Fe((\lam^{-1}(z),z))=\lam^{-1}(z)\cdot\rho(z)=1.
    \]
Then the extended representation $\rho^\Fe$ factors through the quotient group $\BG_m\times_{(\lam^{-1},\Id)(Z^\vee)}G^\vee$:
\begin{align}\label{rhoFe}
   \BG_m\times_{(\lam^{-1},\Id)(Z^\vee)}G^\vee\stackrel{\rho^\Fe}\longrightarrow\GL_{d_{\rho}}
 \colon [(a,g)]\mapsto a\cdot \rho(g). 
\end{align}
On the other hand, we have the natural project:
    \begin{align}\label{pr2}
      \BG_m\times_{(\lam^{-1},\Id)(Z^\vee)}G^\vee
    \stackrel{\pr}\longrightarrow(G^{\vee})_{\ad}\colon [(a,g)]\mapsto [g].  
    \end{align}
It is clear that the above construction fits into the fiber product diagram as in \eqref{diag-fp1}:
\begin{align}\label{diag-fp4}
\begin{matrix}
        \BG_m\times_{(\lam^{-1},\Id)(Z^\vee)}G^\vee&\stackrel{\rho^\Fe}\longrightarrow &\GL_{d_\rho}\\
        \downarrow\scriptstyle{\pr}&& \downarrow\scriptstyle{\pr}\\
        (G^\vee)_\ad&\stackrel{\ol{\rho}}\longrightarrow &\PGL_{d_\rho}
    \end{matrix}.
\end{align}
It remains to show the universality of the group $\BG_m\times_{(\lam^{-1},\Id)(Z^\vee)}G^\vee$ with respect to the fiber product diagram in \eqref{diag-fp4}.

Let $X$ be any variety fitting into the commutative diagram
\begin{equation}\label{diag-fp5}
\begin{array}[c]{ccc}
X&\stackrel{f_1}\longrightarrow&\GL_{d_{\rho}}\\
\downarrow\scriptstyle{f_2}&&\downarrow\scriptstyle{\pr}\\
(G^{\vee})_{\ad}&\stackrel{\ol{\rho}}{\longrightarrow}&\PGL_{d_{\rho}}.
\end{array}   
\end{equation}
We have to show that there exists a unique morphism $f\colon X\longrightarrow\BG_m\times_{(\lam^{-1},\Id)(Z^\vee)}G^\vee$
such that the following commutative diagram 
\begin{align}\label{diag-fp6}
    \begin{matrix}
        & &\GL_{d_\rho}& &\\
        &&&&\\
        &\nearrow\scriptstyle{\rho^\Fe}&\uparrow\scriptstyle{f_1}&\scriptstyle{\pr}\searrow&\\
        &&&&\\
\BG_m\times_{(\lam^{-1},\Id)(Z^\vee)}G^\vee&\stackrel{f}\longleftarrow&X&&\PGL_{d_\rho}\\
        &&&&\\
        &\scriptstyle{\pr}\searrow&\downarrow\scriptstyle{f_2}&\nearrow\scriptstyle{\ol{\rho}}&\\
        &&&&\\
        & &(G^\vee)_\ad&&
    \end{matrix}
\end{align}
We first construct the map $f$ and then prove that it is an algebraic morphism. 

We first consider the morphism $f_2\colon X\longrightarrow(G^\vee)_\ad$ and the short exact sequence 
\[
1\rightarrow Z^\vee\rightarrow G^\vee\stackrel{\pr}\rightarrow(G^\vee)_\ad\rightarrow1.
\]
For any $x\in X$, we take a local section $\Fs_x$ of the projection $\pr$ at $f_2(x)$, namely, take 
an open affine neighborhood $U_x$ of $f_2(x)$ together with a morphism 
$\Fs_x\colon U_x\longrightarrow V_x\subset G^{\vee}$, where $V_x$ is some open affine subvariety of $G^{\vee}$, such that $(\pr\circ\Fs_x)(y)=y$ for any $y\in U_x$. Let $W_x$ be an open affine neighborhood of $x$ in $X$ such that $f_2(W_x)\subset U_x$ since $f_2$ is a morphism. 

Write $g_x=\Fs_x(f_2(x))\in G^\vee$.
According to the commutative diagram in \eqref{diag-fp6}, there is some $a_x\in\BG_m$ such that 
\begin{align}\label{ax}
    f_1(x)=a_x\rho(g_x)=\rho^\Fe(a_x,g_x),\quad {\rm with}\ (a_x,g_x)\in\BG_m\times G^\vee. 
\end{align}
Since the local section is not unique, if we take another local section $\Fs_x'$ at $f_2(x)$ with similar open affine 
neighborhoods, then we have that $g_x'=\Fs_x'(x)$ and 
\[
f_1(x)=a'_x\rho(g'_x)=\rho^\Fe(a'_x,g'_x),\quad {\rm with}\ (a'_x,g'_x)\in\BG_m\times G^\vee. 
\]
Hence we obtain that 
\[
f_1(x)=a_x\rho(g_x)=a_x'\rho(g_x'),
\]
which can be rewritten as 
\[
\rho(g_x(g'_x)^{-1})=a_x^{-1}a'_x.
\]
Since $(G,G^\vee,\rho)$ is an $L$-triple as in Definition \ref{dfn:LT}, and $\pr\circ\rho=\ol{\rho}\circ\pr$ by \eqref{diag-fp1}, there is some $z\in Z^{\vee}$ such that 
$g_x(g'_x)^{-1}=z$, which implies that 
\[
(a_x,g_x)=(a'_x,g'_x)(\lambda(z)^{-1},z).
\]
Hence the image of $(a_x,g_x)\in\BG_m\times G^{\vee}$ in the quotient group 
$\BG_m\times_{(\lam^{-1},\Id)(Z^\vee)}G^\vee$, which is denoted by $[a_x,g_x]$, 
does not depend on the choice of the local sections and is uniquely determined by the given $x\in X$. Therefore, 
we construct a map 
\begin{align}\label{mapf}
    x\mapsto f(x)=[a_x,g_x]
\end{align}
from $X$ to the quotient group $\BG_m\times_{(\lam^{-1},\Id)(Z^\vee)}G^\vee$. It is clear from the construction that 
the map $f$ fits into the commutative diagram in \eqref{diag-fp6}. Moreover, from the construction of the map $f$, we have 
\[
g_x=\Fs_x(f_2(x))\quad {\rm and}\quad 
a_x=f_1(x)\cdot \rho(g_x)^{-1}=f_1(x)\cdot \rho(\Fs_x(x))^{-1}
\]
according to \eqref{ax}. Hence we obtain that the map $f$ is locally explicitly given by 
\[
x\mapsto f(x)=[a_x,g_x]=[f_1(x)\cdot \rho(\Fs_x(x))^{-1},\Fs_x(f_2(x))]
\]
which is clearly algebraic and thus, a morphism. 

It remains to prove the uniqueness of the morphism $f\colon X\longrightarrow\BG_m\times_{(\lam^{-1},\Id)(Z^\vee)}G^\vee$ 
that fits into the commutative diagram in \eqref{diag-fp6}. 
Suppose that 
\[
X\stackrel{f^{\prime}}\longrightarrow\BG_m\times_{(\lam^{-1},\Id)(Z^\vee)}G^\vee\colon x\mapsto[b_x,h_x]
\]
is another morphism fitting into the commutative diagram in \eqref{diag-fp6}, where $b_x\in\BG_m$ and $h_x\in G^{\vee}$. For any $x\in X$, since $\pr(h_x)=[h_x]=f_2(x)\in (G^{\vee})_{\ad}$, $h_x\in G^{\vee}$ is a pre-image of $f_2(x)$ (with a choice of a section). With $h_x$ chosen, we must have that $b_x$ satisfies the equlation:
\[
b_x\rho(h_x)=\rho^\Fe(b_x,h_x)=f_1(x)
\]
according to the commutative diagram in \eqref{diag-fp6}. From the discussion above, the morphism $f$ is independent 
of the choice of the local sections. Hence we must have that $f=f'$, which proves the uniqueness of the morphism $f$.

Finally, it is easy to verify that the kernel of the morphism $\rho_\fp\colon H_\rho^\vee\longrightarrow\GL_{d_\rho}$ is trivial from the following commutative diagram:
\begin{align}\label{diag-fp7}
    \begin{matrix}
        & &\GL_{d_\rho}& &\\
        &&&&\\
        &\nearrow\scriptstyle{\rho^\Fe}&\uparrow\scriptstyle{\rho_\fp}&\scriptstyle{\pr}\searrow&\\
        &&&&\\
\BG_m\times_{(\lam^{-1},\Id)(Z^\vee)}G^\vee&\stackrel{\vsig}\cong&H_\rho^\vee&&\PGL_{d_\rho}\\
        &&&&\\
        &\scriptstyle{\pr}\searrow&\downarrow\scriptstyle{\pr}&\nearrow\scriptstyle{\ol{\rho}}&\\
        &&&&\\
        & &(G^\vee)_\ad&&
    \end{matrix}
\end{align}
In fact, if $h\in H_\rho^\vee$ belongs to the kernel $\ker(\rho_\fp)$, then $f(h)=[a,g]$ belongs to the $\ker(\rho^\Fe)$. This means that $1=\rho^\Fe(a,g)=a\rho(g)$ and $\rho(g)=a^{-1}\in\BG_m$. Since $(G,G^\vee,\rho)$ 
is an $L$-triple, we must obtain that $g=z\in Z^\vee$, which implies that $a=\rho(z)^{-1}=\lam(z)^{-1}$ and 
$[a,g]=[\lam(z)^{-1},z]$ must be the identity element of $\BG_m\times_{(\lam^{-1},\Id)(Z^\vee)}G^\vee$. Therefore, 
the given element $h$ must be the identity element in $H_\rho^\vee$. We are done. 
\end{proof}

From the proof of Proposition \ref{prp:kernelfp}, we obtain the following structure of the group $H_\rho^\vee$.

\begin{cor}\label{cor:Hrhod}
    The isomorphism $H_\rho^\vee\stackrel{\vsig}\cong\BG_m\times_{(\lam^{-1},\Id)(Z^\vee)}G^\vee$ as in the commutative diagram in \eqref{diag-fp7} is explicitly given as follows:
    \[
    h\in H_\rho^\vee\mapsto\vsig(h)=[\rho_\fp(h)\cdot\rho(\Fs_h(h))^{-1},\Fs_h(\pr(h))]\in 
    \BG_m\times_{(\lam^{-1},\Id)(Z^\vee)}G^\vee,
    \]
    which is independent of the choice of the local section $\Fs_h$ near $\pr(h)$ from $(G^\vee)_\ad$ to $G^\vee$.
\end{cor}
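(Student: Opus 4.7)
My plan is to read off this corollary directly from the construction carried out during the proof of Proposition \ref{prp:kernelfp}. That proof showed that $\BG_m\times_{(\lam^{-1},\Id)(Z^\vee)}G^\vee$ is also a universal object for the fiber product diagram \eqref{diag-fp1}; since $H_\rho^\vee$ is by definition the universal object for the same diagram, the two are canonically isomorphic, and all that remains is to specialize the explicit formula from that proof to obtain the one written in the corollary.

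Concretely, I would instantiate the universal-property construction in the proof of Proposition \ref{prp:kernelfp} with the input data $X=H_\rho^\vee$, $f_1=\rho_\fp$, and $f_2=\pr$. The commutativity of \eqref{diag-fp2}, which forms the outer square of \eqref{diag-fp7}, places $(H_\rho^\vee,\rho_\fp,\pr)$ into the slot of $(X,f_1,f_2)$ in \eqref{diag-fp5}. The universal property of $\BG_m\times_{(\lam^{-1},\Id)(Z^\vee)}G^\vee$ then yields a unique morphism
\[
\vsig\colon H_\rho^\vee\longrightarrow \BG_m\times_{(\lam^{-1},\Id)(Z^\vee)}G^\vee,
\]
and the local formula $x\mapsto [f_1(x)\cdot\rho(\Fs_x(f_2(x)))^{-1},\Fs_x(f_2(x))]$ obtained in that proof specializes, under $f_1=\rho_\fp$ and $f_2=\pr$, to exactly the expression asserted in the corollary.

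The independence of the displayed class from the choice of the local section $\Fs_h$ was already verified midway through the proof of Proposition \ref{prp:kernelfp}: two sections $\Fs_h,\Fs_h'$ near $\pr(h)$ differ by multiplication by an element $z\in Z^\vee$, and the corresponding representatives satisfy $(a,g)=(a',g')(\lam(z)^{-1},z)$, so their classes in the quotient coincide. No new computation is needed.

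Finally, the fact that $\vsig$ is an isomorphism, rather than merely a morphism, is formal: both $H_\rho^\vee$ and $\BG_m\times_{(\lam^{-1},\Id)(Z^\vee)}G^\vee$ solve the same universal problem, so the canonical morphism in the opposite direction supplied by the universal property of $H_\rho^\vee$ is automatically a two-sided inverse to $\vsig$. I expect no genuine obstacle; the corollary is a clean repackaging of the explicit formula implicit in the proof of Proposition \ref{prp:kernelfp}, isolated here for later reference in the paper.
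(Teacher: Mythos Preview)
Your proposal is correct and matches the paper's approach exactly: the paper offers no separate proof for this corollary, simply prefacing it with ``From the proof of Proposition \ref{prp:kernelfp}, we obtain the following structure of the group $H_\rho^\vee$,'' and your plan of instantiating the universal-property construction there with $X=H_\rho^\vee$, $f_1=\rho_\fp$, $f_2=\pr$ is precisely how one reads it off. Your write-up is in fact slightly cleaner than the paper's, since you correctly write $\Fs_x(f_2(x))$ inside $\rho(\cdot)^{-1}$ rather than the paper's apparent typo $\Fs_x(x)$.
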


\begin{prp}\label{prp:es}
    The group $H_\rho^\circ$ defined via \eqref{es-1} is the derived group of $H_\rho$, i.e. $H_\rho^\circ=[H_\rho,H_\rho]$.
\end{prp}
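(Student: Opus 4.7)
The plan is to dualize the statement: since $H_\rho$ is defined as the Langlands dual of $H_\rho^\vee$ and the character $\Fd_\fp$ is the Langlands dual of the cocharacter $\Fd_\fp^\vee\colon \BG_m \to H_\rho^\vee$, it suffices to analyze the connected center $Z(H_\rho^\vee)^\circ$ of $H_\rho^\vee$ using the short exact sequence in \eqref{diag-fp3}, and then to translate the result back via Langlands duality on root data.

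The first step is to identify $Z(H_\rho^\vee)^\circ$ with $\Fd_\fp^\vee(\BG_m)$. Using the explicit presentation $H_\rho^\vee \cong \BG_m \times_{(\lam^{-1},\Id)(Z^\vee)} G^\vee$ supplied by Corollary \ref{cor:Hrhod}, the element $\Fd_\fp^\vee(a)$ is represented by the class $[a,1]$, and $(a,1)$ commutes with every $(b,h)$ in $\BG_m\times G^\vee$; hence $\Fd_\fp^\vee(\BG_m)\subseteq Z(H_\rho^\vee)$. For the reverse inclusion, I would use that $(G^\vee)_\ad = G^\vee/Z(G^\vee)$ is a semisimple adjoint group and therefore has trivial center. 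If $c\in Z(H_\rho^\vee)$, then $\pr(c)\in Z((G^\vee)_\ad) = \{1\}$, so the exactness of \eqref{diag-fp3} forces $c\in\ker(\pr) = \Fd_\fp^\vee(\BG_m)$. Combining both inclusions, $Z(H_\rho^\vee) = \Fd_\fp^\vee(\BG_m)\cong\BG_m$, which is connected and hence equals its own identity component.

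Next I would invoke the standard identification coming from root data under Langlands duality: for any split reductive group $H$ with Langlands dual $H^\vee$, the character lattice of the cocenter $H/[H,H]$ coincides canonically with the cocharacter lattice of the connected center $Z(H^\vee)^\circ$, both being equal to the sublattice of $X^*(T_H) = X_*(T_{H^\vee})$ annihilated by the coroots of $H$, equivalently by the roots of $H^\vee$. Applied to $H_\rho$ and $H_\rho^\vee$, the previous step yields
\[
X^*(H_\rho/[H_\rho,H_\rho]) \;=\; X_*(Z(H_\rho^\vee)^\circ) \;=\; X_*(\BG_m) \;=\; \BZ,
\]
with the canonical generator on the right corresponding under duality to $\Fd_\fp$ on the left. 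Therefore $\Fd_\fp$ descends to an isomorphism $H_\rho/[H_\rho,H_\rho]\xrightarrow{\sim}\BG_m$, whence the map $\Fd_\fp\colon H_\rho\to\BG_m$ is surjective with kernel $H_\rho^\circ = [H_\rho,H_\rho]$, as asserted.

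The only genuinely nontrivial step is the computation of $Z(H_\rho^\vee)$; its two ingredients are the centrality of $\Fd_\fp^\vee(\BG_m)$ read off from the fiber product presentation, and the triviality of $Z((G^\vee)_\ad)$. Both are routine structural facts, so there is no serious obstacle: the argument is essentially formal once one passes to the Langlands dual side and uses the fiber product description of $H_\rho^\vee$ already established in Corollary \ref{cor:Hrhod}.
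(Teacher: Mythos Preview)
Your argument is correct and takes a genuinely different route from the paper. The paper proceeds by explicitly computing the root datum of $H_\rho^\vee$ with respect to the torus $\BG_m\times (T^\vee)_\ad$, then exhibiting a concrete group $\BG_m\ltimes_\lambda G_0^{\rsc}$ whose root datum is dual to it; from this explicit model one reads off directly that the derived subgroup and the kernel of $\Fd_\fp$ both coincide with $\{1\}\times G_0^{\rsc}$. Your approach bypasses the construction of $H_\rho$ altogether: you determine $Z(H_\rho^\vee)$ from the short exact sequence \eqref{diag-fp3} (using surjectivity of $\pr$ and triviality of the center of the adjoint group $(G^\vee)_\ad$), and then invoke the general lattice identity $X^*(H/[H,H])=X_*(Z(H^\vee)^\circ)$ to conclude. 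Your argument is shorter and more structural, and as a bonus it simultaneously establishes the surjectivity of $\Fd_\fp$ (i.e.\ the exactness in \eqref{es-1}), which the paper obtains only through the explicit model. What the paper's approach buys, however, is the concrete isomorphism $H_\rho\cong\BG_m\ltimes_\lambda G_0^{\rsc}$ recorded in Corollary~\ref{cor:Hrhosc}; this is used repeatedly in the sequel (for the simply-connectedness of $[H_\rho,H_\rho]$, for Lemma~\ref{lem_G_lambda} and Corollary~\ref{ug-VM}, and in Section~\ref{sec-BC}), so in the context of the paper the explicit computation is not wasted effort.
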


\begin{proof}
We first consider the structure of the dual group $H_\rho^\vee$ by the isomorphism:
\[
H_\rho^\vee\stackrel{\vsig}\cong\BG_m\times_{(\lam^{-1},\Id)(Z^\vee)}G^\vee,
\]
which is given in Corollary \ref{cor:Hrhod} and the proof of Proposition \ref{prp:kernelfp}. The given maximal torus of $\BG_m\times_{(\lam^{-1},\Id)(Z^\vee)}G^\vee$ is $\BG_m\times_{(\lam^{-1},\Id)(Z^\vee)}T^\vee$, which has the following isomorphism:
\[
\BG_m\times_{(\lam^{-1},\Id)(Z^\vee)}T^\vee\cong \BG_m \times (T^\vee)_\ad \colon [a,t] \mapsto (\lambda(t)a,[t]).
    \]
   Let $(X,\Phi, X^{\vee},\Phi^{\vee})$ be the root datum of $G$ and denote the root lattice, the coroot lattice, and the weight lattice by $Q$, $Q^\vee$ and $P$ respectively. We may denote by $\alpha \mapsto \bar{\alpha}$ 
   the map $X \to P$, which is dual to $Q^\vee \hookrightarrow X^\vee$, and identify the maximal split torus of $H_\rho^\vee$ with $\BG_m \times (T^\vee)_\ad$. Then the root datum of $H_\rho^\vee$ is given by
    \[
    (\mathbb{Z}\oplus Q^\vee, \{0\}\oplus\Phi^\vee, \mathbb{Z}\oplus P, \{\langle \alpha,\lambda\rangle ,\alpha)\mid \alpha\in\Phi \} ). 
    \]
    Associated with the  natural morphism $G^\vee \to H_\rho^\vee$, one has the map between the root data given by
    \[
         \mathbb{Z}\oplus Q^\vee \to X^\vee : (n, \alpha^\vee) \mapsto n\lambda+\alpha^\vee,
    \]
    with its dual
    \[
        X \to \mathbb{Z}\oplus P : \alpha \mapsto (\langle\alpha,\lambda\rangle, \bar{\alpha}).
    \]

    On the other hand, let $G_0^\rsc$ be the simply-connected cover of $G_0$. We consider the semi-direct product $\BG_m\ltimes_{\lambda}G_0^\rsc$ with the multiplication given by
    \begin{align}\label{mf0}
     (a_1,g_1)(a_2,g_2)=(a_1a_2,\lambda(a_2)^{-1}g_1\lam(a_2)g_2),\quad {\rm for}\ a_1,a_2\in\BG_m, g_1,g_2\in G_0^\rsc.   
    \end{align}
    Here the highest weight $\lam\in X^*(T^\vee)$ is used as a co-character $\lambda\in X_*(T)$ of $T$. We may use the same $\lam$ to denote its image in $X_*(T_\ad)$ via the projection from $T$ to $T_\ad$. Since $G$ and $G_0^\rsc$ share the same adjoint group, we have that $T_0^\rsc / Z_0^\rsc = T_\ad$, where $T_0^{\rsc}$ is the maximal torus of $G_0^{\rsc}$ and $Z_0^{\rsc}$ is the center of $G_0^{\rsc}$. Hence the formula for the multiplication 
    in \eqref{mf0} is well-defined.
Take the maximal split torus $\BG_m \ltimes_\lam T_0^\rsc$ of $\BG_m\ltimes_{\lambda}G_0^{\rsc}$. Since $\lam(\BG_m)$ 
commutes with $T_0^\rsc$, we have that $\BG_m \ltimes_\lam T_0^\rsc=\BG_m \times T_0^\rsc$. 
Thus, the root datum of $\BG_m\ltimes_{\lambda}G_0^\rsc$ with respect to $\BG_m \times T_0^\rsc$ is given by
    \[
    (\mathbb{Z}\oplus P, \{\langle \alpha,\lambda\rangle ,\alpha)\mid \alpha\in\Phi \} , \mathbb{Z}\oplus Q^\vee, \{0\}\oplus\Phi^\vee).
    \]

    Denote the covering $G_0^\rsc \to G_0$ by $g \mapsto\bar g$, and consider the homomorphism
    \begin{align}\label{map0}
        \BG_m\ltimes_{\lambda}G_0^\rsc \longrightarrow G : (a,g) \mapsto \lambda(a)\bar g.
    \end{align}  
    Then the associated map on the root data is given by
    \[
        X \to \mathbb{Z}\oplus P : \alpha \mapsto (\langle\alpha,\lambda\rangle, \bar{\alpha}),
    \]
    with its dual
    \[
        \mathbb{Z}\oplus Q^\vee \to X^\vee : (n, \alpha^\vee) \mapsto n\lambda+\alpha^\vee.
    \]
    The above proves that $H_\rho\cong\BG_m\ltimes_{\lambda}G_0^\rsc$. Hence the derived subgroup $[H_\rho,H_\rho]$ is 
    isomorphic to the derived subgroup of $\BG_m\ltimes_{\lambda}G_0^\rsc$, which is clearly $\{ 1 \} \times G_0^\rsc$.

    Finally, from the above discussion, the projection 
    \[
        \BG_m\times_{(\lam^{-1},\Id)(Z^\vee)}G^\vee=\BG_m \times G^\vee / \{ (\lambda(z)^{-1},z) \mid z \in Z^\vee \} \longrightarrow (G^\vee)_\ad : [a,g] \mapsto [g]
    \]
    is the dual of the embedding
    \[
        G_0^\rsc \longrightarrow \BG_m\ltimes_{\lambda}G_0^\rsc : g \mapsto (1,g),
    \]
    which implies the kernel $H_\rho^\circ$ is also the subgroup $\{ 1 \} \times G_0^\rsc$. Therefore we prove that   $H_\rho^\circ\cong[H_\rho,H_\rho]$ is simply connected.
\end{proof}

From the proof of Proposition \ref{prp:es}, we obtain the following structure of the group $H_\rho$.

\begin{cor}\label{cor:Hrhosc}
    The group $H_\rho$, which is defined as the Langlands dual of the group $H_\rho^\vee$ as in \eqref{diag-fp2} 
    has the following property.
    \begin{enumerate}
        \item $H_\rho\cong\BG_m\ltimes_{\lambda}G_0^\rsc$, whose multiplication is given as in \eqref{mf0}.
        \item Its derived group $H_\rho^\circ=[H_\rho,H_\rho]\cong G_0^\rsc$ is simply connected.
    \end{enumerate} 
\end{cor}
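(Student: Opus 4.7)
The plan is to extract both assertions directly from the root-datum computation carried out inside the proof of Proposition \ref{prp:es}; no new structural input is needed. The key observation is that the proof of Proposition \ref{prp:es} already pins down $H_\rho$ up to isomorphism by matching root data, and this matching simultaneously identifies the derived subgroup.

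First, for part (1), I would recall that the root datum of $H_\rho^\vee$ was computed relative to the maximal split torus $\BG_m \times (T^\vee)_\ad$ to be
\[
(\BZ \oplus Q^\vee,\ \{0\}\oplus \Phi^\vee,\ \BZ \oplus P,\ \{(\langle \alpha,\lambda\rangle, \alpha)\mid \alpha \in \Phi\}).
\]
On the other hand, since $\lambda(\BG_m)$ commutes with $T_0^\rsc$, the torus $\BG_m \ltimes_\lambda T_0^\rsc$ is simply the product $\BG_m \times T_0^\rsc$, and the root datum of $\BG_m \ltimes_\lambda G_0^\rsc$ relative to this torus is
\[
(\BZ \oplus P,\ \{(\langle \alpha,\lambda\rangle,\alpha)\mid \alpha \in \Phi\},\ \BZ \oplus Q^\vee,\ \{0\}\oplus \Phi^\vee),
\]
which is exactly the dual of the root datum of $H_\rho^\vee$. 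By the classification theorem for split reductive groups and the definition of $H_\rho$ as the Langlands dual of $H_\rho^\vee$, we conclude $H_\rho \cong \BG_m \ltimes_\lambda G_0^\rsc$ with multiplication as in \eqref{mf0}.

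For part (2), observe that in any semidirect product $\BG_m \ltimes_\lambda G_0^\rsc$ with $\BG_m$ abelian and acting via conjugation by $\lambda(\BG_m) \subset T$, the commutator of two elements always lies in the normal factor $\{1\} \times G_0^\rsc$; conversely $G_0^\rsc$ is semisimple and therefore equal to its own derived group, so $[H_\rho,H_\rho] = \{1\}\times G_0^\rsc$. This subgroup is simply connected by construction. The identification $H_\rho^\circ \cong [H_\rho,H_\rho]$ is then exactly the content of the last paragraph of the proof of Proposition \ref{prp:es}, where the kernel of $\Fd_\fp\colon H_\rho \to \BG_m$ is shown via the dual projection $\BG_m \times_{(\lambda^{-1},\Id)(Z^\vee)} G^\vee \to (G^\vee)_\ad$ to coincide with the factor $G_0^\rsc$.

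There is essentially no obstacle here beyond the root-data bookkeeping that has already been carried out; the one subtle point worth stressing is that the semidirect action in \eqref{mf0} is well-defined precisely because $\lambda \in X_*(T)$ has a well-defined image in $X_*(T_\ad) = X_*(T_0^\rsc / Z_0^\rsc)$, so that the inner conjugation $\lambda(a)^{-1} g \lambda(a)$ makes sense on $G_0^\rsc$. This is why the construction naturally produces the simply connected cover $G_0^\rsc$ rather than $G_0$ itself, which is exactly condition (3) of Definition \ref{dfn:BKN}.
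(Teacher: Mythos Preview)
Your proposal is correct and follows exactly the paper's approach: the corollary has no independent proof in the paper but is stated as an immediate consequence of the root-datum computation in the proof of Proposition~\ref{prp:es}, which is precisely what you extract. Your additional remarks (on why commutators land in $\{1\}\times G_0^\rsc$ and why the semidirect action is well-defined via $X_*(T_\ad)$) merely spell out details already implicit in that proof.
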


By the above discussion with Propositions \ref{prp:kernelfp} and \ref{prp:es} and Corollary \ref{cor:Hrhosc}, we obtain the following result, which implies the first part of Theorem \ref{thm:localm}.

\begin{thm}\label{thm:BKNT}
Let $k$ be any field of characteristic zero. For any given $L$-triple $(G,G^\vee,\rho)$ with the reductive algebraic group $G$ being $F$-split, the constructed tripe $(H_\rho,H_\rho^\vee,\rho_\fp)$ is a 
{\rm BKN}-triple, i.e., $\Delta_\fp=(H_\rho,H_\rho^\vee,\rho_\fp)_{\Fd_\fp}=\Delta_\bkn(\Fd_\fp)$.
\end{thm}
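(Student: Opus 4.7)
The plan is to verify the four defining conditions of a BKN-triple (Definition \ref{dfn:BKN}) for the constructed triple $\Delta_\fp = (H_\rho, H_\rho^\vee, \rho_\fp)_{\Fd_\fp}$. By the time we arrive at this theorem, each of the four conditions has either already been established by an auxiliary proposition or is immediate from the construction, so the proof is largely a matter of assembly.

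First, condition (1) -- triviality of the kernel $\ker(\rho_\fp)$ -- is precisely the conclusion of Proposition \ref{prp:kernelfp}, whose argument exploits the $L$-triple hypothesis on $(G, G^\vee, \rho)$ to rule out non-identity elements via the isomorphism $H_\rho^\vee \cong \BG_m \times_{(\lam^{-1}, \Id)(Z^\vee)} G^\vee$ of \eqref{claim1}. Condition (4) -- that $\rho_\fp \circ \Fd_\fp^\vee(a) = a \cdot \RI_{V_\rho}$ -- is the commutativity of the left square in diagram \eqref{diag-fp3}, which was noted immediately after that diagram. For conditions (2) and (3), I would invoke Corollary \ref{cor:Hrhosc}, which gives the structural description $H_\rho \cong \BG_m \ltimes_\lam G_0^\rsc$ together with the identification $[H_\rho, H_\rho] \cong G_0^\rsc$. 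Under this identification, the character $\Fd_\fp$ defined in \eqref{Fdfp} coincides with the projection onto the $\BG_m$-factor, whose kernel is exactly $G_0^\rsc$. This yields the short exact sequence
\[
1 \longrightarrow [H_\rho, H_\rho] \longrightarrow H_\rho \stackrel{\Fd_\fp}{\longrightarrow} \BG_m \longrightarrow 1
\]
required by condition (2), while condition (3) follows because $G_0^\rsc$ is by construction the simply-connected cover of $G_0 = [G, G]$.

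The only point demanding more than citation is the identification of $\Fd_\fp$ with the $\BG_m$-projection out of $\BG_m \ltimes_\lam G_0^\rsc$. This reduces to tracing character-cocharacter pairings through the root datum $(\BZ \oplus Q^\vee, \{0\} \oplus \Phi^\vee, \BZ \oplus P, \{(\langle \alpha, \lam \rangle, \bar\alpha) \mid \alpha \in \Phi\})$ of $H_\rho^\vee$ computed in the proof of Proposition \ref{prp:es}: the cocharacter $\Fd_\fp^\vee \in X_*(H_\rho^\vee) = \BZ \oplus Q^\vee$ is $(1,0)$, whose dual in $X^*(H_\rho) = \BZ \oplus P$ is the projection character to the first factor. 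With this compatibility in hand, all four conditions hold. The genuine difficulty of the theorem has therefore been absorbed into Proposition \ref{prp:es}, where the delicate comparison of root data across Langlands duality is carried out; the present theorem is the formal packaging of those inputs into the BKN-triple axioms.
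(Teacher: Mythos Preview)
Your proposal is correct and follows essentially the same route as the paper: the theorem is stated immediately after the sentence ``By the above discussion with Propositions \ref{prp:kernelfp} and \ref{prp:es} and Corollary \ref{cor:Hrhosc}, we obtain the following result,'' and carries no separate proof beyond that assembly of prior results. Your write-up is in fact more explicit than the paper's, since you spell out the identification of $\Fd_\fp$ with the $\BG_m$-projection via the root datum; in the paper this identification is buried in the last paragraph of the proof of Proposition \ref{prp:es}, where the duality between the projection $H_\rho^\vee \to (G^\vee)_\ad$ and the embedding $G_0^\rsc \hookrightarrow \BG_m \ltimes_\lambda G_0^\rsc$ is recorded.
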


\subsection{Local $L$-factor}\label{ssec-LLF}
To complete the proof of Theorem \ref{thm:localm}, we consider local $L$-factors in the sense of Langlands. 

Over a local field $F$ of characteristic zero we prove a relation of local $L$-factors, whose definitions are 
given under the assumption that the local Langlands conjecture holds for the relevant group $G(F)$. 
More precisely, they are defined by 
\begin{align}\label{LLF}
L(s,\pi,\rho_\lambda):=L(s,\rho_\lambda\circ\phi_\pi)
\end{align}
if $\phi_\pi$ is the local $L$-parameter of $\pi$. Note that this definition of local $L$-factors is of assumption-free when either $\pi$ is unramified or $F$ is archimedean. 
When $F$ is non-archimedean and $\pi$ is ramified, the local Langlands conjecture for $(G,F)$ remains to be proved in general, although many cases have been proved.

To complete the proof of Theorem \ref{thm:localm}, we take $F$ to be any local field of characteristic zero. From the fiber product diagram in \eqref{diag-fp2}, there is a morphism 
\begin{align}\label{etafpvee}
    \eta_\fp^\vee\colon G^\vee\longrightarrow H_\rho^\vee.
\end{align}
Let $\ScL_F$ be the local Langlands group of $F$. Then the local $L$-parameters of $G(F)$ are the admissible 
homomorphisms 
\[
\phi\colon \ScL_F\longrightarrow G^\vee(\BC)
\]
up to conjugation by $G^\vee(\BC)$. Let $\Pi_F(G)$ be the set of equivalence classes of irreducible admissible representations of $G(F)$, which are assumed to be of the Casselman-Wallach type if $F$ is archimedean. 

By the assumption in Theorem \ref{thm:localm}, the local Langlands conjecture holds for both triples $(G,G^\vee,\rho)$ and $\Delta_\fp=(H_\rho,H_\rho^\vee,\rho_\fp)_{\Fd_\fp}$ over $F$. For any $\pi\in\Pi_F(G)$, 
there is a local Langlands parameter $\phi_\pi$ associated with $\pi$. It is clear that the composition 
$\eta_\fp^\vee\circ\phi_\pi$ is a local Langlands parameter for the group $H_\rho(F)$. Hence by the local Langlands 
conjecture for $H_\rho(F)$, we obtain that $\eta_\fp^\vee\circ\phi_\pi$ is the local Langlands parameter for some 
$\sig\in\Pi_F(H_\rho)$. In other words, we have the following diagram:
\begin{align}\label{diag-L1}
    \begin{matrix}
         \ScL_F &&\stackrel{\phi_\pi}\longrightarrow&&G^\vee(\BC)&&\stackrel{\rho}\longrightarrow&&\GL_{d_\rho}(\BC)\\ 
              &&&&&&&&\\ 
        &&\scriptstyle{\phi_\sig}\searrow&&\downarrow\scriptstyle{\eta_\fp^\vee}&&\nearrow\scriptstyle{\rho_\fp}&&\\
        &&&&&&&&\\
        &&&&H_\rho^\vee(\BC)&&&&
    \end{matrix}
\end{align}
with $\phi_\sig=\eta_\fp^\vee\circ\phi_\pi$. 
According to the definition of local $L$-factors in \eqref{LLF}, we obtain that 
\[
L(s,\pi,\rho)=L(s,\rho\circ\phi_\pi)=L(s,\rho_\fp\circ\eta^\vee_\fp\circ\phi_\pi)=L(s,\rho_\fp\circ\phi_\sig)=L(s,\sig,\rho_\fp). 
\]
Combining with Theorem \ref{thm:BKNT}, this completes the proof of Theorem \ref{thm:localm}.

\section{$L$-Monoid and BKN-Triple}\label{sec-LMBKN}

By Theorem \ref{thm:localm}, for any given $L$-triple $(G,G^\vee,\rho)$, in order to understand the Langlands $L$-function associated with $(G,G^\vee,\rho)$, locally and globally, it is enough to work with the associated BKN-triple
$\Delta_\fp=(H_\rho,H_\rho^\vee,\rho_\fp)_{\Fd_\fp}$, which is a triple given by Definition \ref{dfn:BKN}. In this section, we may take any BKN-triple $\Delta_{\bkn}(\Fd):=(G,G^\vee,\rho)_\Fd$ from Definition \ref{dfn:BKN}.

As explained in \cite{Ngo20}, the Schwartz space in the Braverman-Kazhdan proposal may depend on the geometry of the 
$L$-monoid $\CM$ that contains $G$ as the unit group, which is the open subset of all invertible elements. 
From \cite[Proposition 5.1]{Ngo20}, one can deduce the existence of such $L$-monoid $\CM^\rho$, following the general theory of M. Putcha and L. Renner (\cite{Ren05}). For the 
BKN-triple $\Delta_{\bkn}(\Fd):=(G,G^\vee,\rho)_\Fd$, since the derived group $G_0=[G,G]$ is simply connected, one may also construct a monoid $\CM_\rho$ from the construction of the Vinberg universal monoid (\cite{Vin95, Ngo, Ngo14}) that also contains $G$ as the unit group. The goal of this section is to prove Theorem \ref{thm:monoids}, which will be done in Section \ref{ssec-PThmM}

\subsection{Construction via the Vinberg method}\label{ssec-CVM}

For a $\bkn$-triple $\Delta_{\bkn}(\Fd):=(G,G^\vee,\rho)_\Fd$, we construct the $L$-monoid 
$\CM_\rho$ via the Vinberg method. 

Take $k$ to be a field of characteristic zero and fix an $k$-Borel pair $(B,T)$ of $G$. By Definition 
\ref{dfn:BKN}, the derived group $G_0=[G,G]$ is simply connected. 
Let $(B_0,T_0)=(B\cap G_0, T\cap G_0)$ be the corresponding $k$-Borel pair of $G_0$. Let $Z$ be the center of $G$ and $Z_{0}$ the center of $G_0$. Then we have the adjoint group $G_\ad=G/Z=G_0/Z_0$, and in particular, $T_\ad=T_0/Z_0=T/Z$. 

Associated with $(G_0,B_0,T_0)$, we take $\Delta$ to be the set of simple roots and
$\wh{\Delta}$ to be the set of the associated fundamental weights. 
For each fundamental weight $\ome\in\wh{\Delta}$, denote by $\rho_{\ome}$ the associated irreducible rational representation of $G_0$ with highest weight $\omega$. 
Denote by $V_\ome$ the underlying space of $\rho_\ome$.
Define $G_0^+:=(T_0\times G_0)/Z_0^{\Delta}$, where $Z_0^{\Delta}$ denotes the diagonal embedding of the center $Z_0$ of $G_0$ into 
$T_0\times G_0$. We may extend the representation $\rho_{\ome}$ of $G_0$ to $G_0^+$ by 
\begin{align}
  \rho_{\omega}^+(t,g)=\omega(w_0(t^{-1}))\rho_{\omega}(g), \quad {\rm for}\ (t,g)\in T_0\times G_0,
\end{align}
where $w_0$ is the longest Weyl element in the Weyl group $W=W(G_0,T_0)$. Each root $\alpha\in\Delta$ can also be extended to $G_0^+$ via $\alpha^+(t,g):=\alpha(t)$.

Following \cite{Vin95}, to construct the Vinberg universal monoid. Define a map $\iota$ by composing the following representation 
\begin{align}\label{iota-vum}
    (\prod_{\alpha\in\Delta}\alpha^+)\times(\prod_{\omega\in\widehat{\Delta}}\rho_{\omega}^+) \colon G_0^+\longrightarrow (\prod_{\alpha\in\Delta}\BG_m)\times(\prod_{\omega\in\widehat{\Delta}}\GL(V_\ome)) 
\end{align}
with the embedding
\[
    (\prod_{\alpha\in\Delta}\BG_m)\times(\prod_{\omega\in\widehat{\Delta}}\GL(V_\ome))
    \longrightarrow (\prod_{\alpha\in\Delta}\mathbb{A}^1)\times(\prod_{\omega\in\widehat{\Delta}}\End(V_\ome)).
\]
Take $\CM^+$ to be the normalization of the closure of the image $\iota(G_0^+)$ of $G_0^+$, which is the {\bf Vinberg universal monoid} 
associated with the given reductive group $G$. 
Denote by $G^+$ the unit group of the monoid $\CM^+$. It is clear that $G^+\times G^+$ acts on $\CM^+$ by
\[
m(g_1,g_2)=g_1^{-1}mg_2,\quad {\rm for}\ m\in\CM^+, (g_1,g_2)\in G^+\times G^+.
\]

Let $G^+_\der=[G^+,G^+]$ be the derived group of the unit group $G^+$. The abelianization morphism from $\CM^+$ to the GIT quotient 
$\CM^+\!\sslash\!(G^+_\der\times G^+_\der)$ by $G^+_\der\times G^+_\der$ is denoted by 
\begin{align}\label{abm}
    \pi^+\colon \CM^+\longrightarrow\CM^+\!\sslash\!(G^+_\der\times G^+_\der)=\prod_{\alp\in\Delta}\mathbb{A}^1.
\end{align}
It is a flat morphism as algebraic varieties with equi-dimensional reduced fibers (\cite{Vin95, Ngo, Ngo14}). 

Since the derived group $G_0$ of $G$ is split and simply connected, its dual group $G_0^{\vee}$ is of adjoint type. Let $G_{0}^{\vee,\rsc}$ be the simply connected cover of $G_0^{\vee}$. Then $G_{0}^{\vee,\rsc}$ is the dual group of the adjoint group $G_\ad$ 
of $G$. Let $T_0^\vee$ be the maximal torus of $G_0^\vee$ that is dual to $T_0$ and $T_0^{\vee,\rsc}$ be the maximal torus of $G_0^{\vee,\rsc}$ that is dual to the torus $T_\ad$. It is clear that $T_\ad=T_0/Z_0=T/Z$ and we have the following  exact sequences
\[
1\rightarrow Z_0\rightarrow T_0\rightarrow T_0/Z_0\rightarrow 1\quad {\rm and}\quad 
1\rightarrow Z_0^{\vee,\rsc}\rightarrow T_0^{\vee,\rsc}\rightarrow T_0^{\vee}\rightarrow 1,
\]
where $Z_0^{\vee,\rsc}$ is the center of $G_0^{\vee,\rsc}$ and is the Cartier dual of $Z_0$. 
Write $T^\vee$ for the maximal torus of $G^\vee$ that is dual to $T$, then we have that $X^*(T)=X_*(T^\vee)$ and $X^*(T^\vee)=X_*(T)$. 

Assume that the irreducible rational representation $\rho$ of the dual group $G^\vee$ in the given $\bkn$-triple 
$\Delta_{\bkn}(\Fd):=(G,G^\vee,\rho)_\Fd$ has a dominant weight $\lam\in X^*(T^\vee)$ as its highest weight. 
In this case, we may write $\rho=\rho_\lam$. 
This dominant weight $\lam$ can be viewed as a dominant co-character in $X_*(T)$ of $T$. By combining the natural quotient morphism $T\to T/Z=T_0/Z_0=T_{0, \ad}$, we regard such $\lam\in X_*(T)$ as an element in $X_*(T_{0, \ad})$,  which is still denoted by $\lam$ if it does not cause any confusion. Then we obtain its composition 
\begin{align}\label{lam-alp}
    \BG_m\longrightarrow T_0/Z_0\longrightarrow \prod_{\alpha\in\Delta}\BG_m,
\end{align}
which extends to $\mathbb{A}^1 \to \prod_{\alpha \in \Delta}\mathbb{A}^1$ and is still denoted by $\lambda$. The monoid $\CM_{\lambda}$ associated with $\lambda$ or the 
representation $\rho$ is defined to be the following fiber product:
\begin{align}\label{rhoM}
   \begin{array}[c]{ccc}
\CM_{\lambda}&\longrightarrow&\CM^{+}\\
\downarrow&&\downarrow\scriptstyle{\pi^+}\\
\mathbb{A}^1&\stackrel{\lambda}{\longrightarrow}&\prod_{\alpha \in \Delta}\mathbb{A}^1
\end{array}
\end{align}
where the abelianization morphism $\pi^+$ is as given in \eqref{abm}. Note the morphism $\lambda: \mathbb{A}^1 \to \prod_{\alpha \in \Delta}\mathbb{A}^1$ is normal and $M^+$ is normal, we have $M_\lambda$ is normal.

Let $G_{\lambda}$ be the unit group of $\CM_{\lambda}$, which fits into an exact sequence
\begin{align}\label{Glambda}
    1\longrightarrow G_0\longrightarrow G_{\lam}\stackrel{m_\lam}{\longrightarrow}\BG_m\longrightarrow 1
\end{align}
where $m_\lam$ is the abelianization morphism of $G_\lam$.
More precisely, by the definition of $G_0^+$ and by \eqref{iota-vum}, \eqref{lam-alp} and \eqref{rhoM}, the unit group $G_{\lambda}$ fits into the fiber product
\begin{align}\label{diag_G_lambda}
    \begin{array}[c]{ccc}
G_{\lambda}&\longrightarrow&G_0^+=(T_0\times G_0)/Z_0^{\Delta}\\
\downarrow&&\downarrow\scriptstyle{\pr_1}\\
\BG_m&\stackrel{\lambda}{\longrightarrow}& T_\ad=T_0/Z_0
\end{array}
\end{align}
and the multiplication of the group $G_{\lambda}$ is induced from these of $\BG_m$ and $G_0^+$. 

\begin{lem}\label{lem_G_lambda}
    We may write $G_{\lambda}=\BG_m\ltimes_{\lambda}G_0$, with the multiplication given by
\begin{align}\label{multGlam}
    (a_1,g_1)(a_2,g_2)=(a_1a_2,\lambda(a_2)^{-1}g_1\lam(a_2)g_2),\quad {\rm for}\ a_1,a_2\in\BG_m, g_1,g_2\in G_0.
\end{align}
Since $\lambda(a_1)\in T_0/Z_0$, the product $\lambda(a_2)g_1\lambda(a_2)^{-1}$ is well-defined. 
\end{lem}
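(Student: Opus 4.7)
The plan is to exhibit an explicit isomorphism of varieties $G_\lambda \cong \BG_m \times G_0$ and then to read off the induced group law, verifying it matches \eqref{multGlam}. Unpacking \eqref{diag_G_lambda}, the fiber product $G_\lambda$ consists of pairs $(a,[t,g]) \in \BG_m \times G_0^+$ with $[t] = \lambda(a)$ in $T_\ad = T_0/Z_0$. I would define
\[
\phi \colon G_\lambda \longrightarrow \BG_m \times G_0, \qquad (a,[t,g]) \longmapsto (a,\, t^{-1}g),
\]
and show that $\phi$ is an isomorphism of algebraic varieties.

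The preliminary check is that the assignment $[t,g] \mapsto t^{-1}g$ is well defined on $G_0^+ = (T_0 \times G_0)/Z_0^\Delta$: replacing $(t,g)$ by $(zt,zg)$ with $z \in Z_0$ yields $(zt)^{-1}(zg) = t^{-1}g$ since $z$ is central in $G_0$. For the isomorphism claim, I would observe that the morphism $T_0 \times G_0 \to T_\ad \times G_0$, $(t,g) \mapsto ([t], t^{-1}g)$, is $Z_0$-invariant for the diagonal action, surjective, and has fibers equal to the $Z_0^\Delta$-orbits; it therefore realizes the very quotient that defines $G_0^+$, producing an isomorphism $G_0^+ \cong T_\ad \times G_0$. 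Base-changing along $\lambda\colon \BG_m \to T_\ad$ then yields $G_\lambda \cong \BG_m \times G_0$, and by construction this identification is exactly $\phi$.

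Finally, I would transport the group law. The multiplication on $G_\lambda$ is inherited componentwise from $\BG_m \times G_0^+$, so
\[
(a_1,[t_1,g_1])(a_2,[t_2,g_2]) = (a_1 a_2, [t_1 t_2, g_1 g_2]).
\]
Writing $h_i = t_i^{-1}g_i$ and hence $g_i = t_i h_i$, applying $\phi$ to this product yields in the second coordinate $(t_1 t_2)^{-1}(t_1 h_1)(t_2 h_2) = t_2^{-1} h_1 t_2 h_2$. Since $Z_0$ is central in $G_0$, conjugation by $t_2 \in T_0$ on $G_0$ factors through $[t_2] = \lambda(a_2) \in T_\ad$, so $t_2^{-1} h_1 t_2 = \lambda(a_2)^{-1} h_1 \lambda(a_2)$, reproducing \eqref{multGlam}. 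The only real subtlety is this well-definedness of the factored conjugation, which is exactly the point flagged at the end of the lemma statement; the remaining steps are routine.
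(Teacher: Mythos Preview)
Your proposal is correct and follows essentially the same approach as the paper: both arguments use the variety isomorphism $G_0^+ \cong T_\ad \times G_0$ given by $[t,g] \mapsto ([t],\,t^{-1}g)$, base-change along $\lambda$ to identify $G_\lambda$ with $\BG_m \times G_0$, and then compute the induced multiplication by tracking the image of a product back through this identification. The paper is somewhat more explicit about the universal property of the fiber product (writing out the inverse map $q_1\colon (a,g)\mapsto [\lambda(a),\lambda(a)g]$ and the induced morphism from an arbitrary test object), whereas you work directly with the map $\phi$ and representatives $t_i$; but the substance is identical.
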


\begin{proof}
    Note that we have an isomorphism of varieties 
    \[
    (T_0\times G_0)/Z_0^{\Delta}\cong (T_0/Z_0)\times G_0: [(t,g)]\mapsto ([t],t^{-1}g)
    \]
    with inverse given by $(T_0/Z_0)\times G_0\ni ([t],g)\mapsto [(t,tg)]\in (T_0\times G_0)/Z_0^{\Delta}$. Then the projection $\pr_1:(T_0\times G_0)/Z_0^{\Delta}$ transfers to the canonical projection $(T_0/Z_0)\times G_0\rightarrow T_0/Z_0$. Then $\BG_m\times G_0$ is the fiber product of $(T_0/Z_0)\times G_0$ with $\BG_m$ over $T_0/Z_0$ according to the transitivity of the product (see the last second paragraph in \cite[Page 89]{Har77}, 
    for instance): 
\begin{align}\label{diag-1}
    \begin{array}[c]{ccc}
\BG_m\times G_0 &\stackrel{\lambda\times \Id}{\longrightarrow} & (T_0/Z_0)\times G_0\\
\downarrow&&\downarrow\scriptstyle{\pr_1}\\
\BG_m&\stackrel{\lambda}{\longrightarrow}& T_\ad=T_0/Z_0
\end{array}
\end{align}
with morphisms
\[
q_1^{\prime}=\lam\times\Id:\BG_m\times G_0\rightarrow (T_0/Z_0)\times G_0:(a,g)\mapsto (\lambda(a),g)
\]
and
\[
q_2=\pr_1:\BG_m\times G_0\rightarrow \BG_m:(a,g)\mapsto a.
\]
Moreover, for any variety $X$ with $f_1=(f_1^{(1)},f_1^{(2)}):X\rightarrow  (T_0/Z_0)\times G_0$ and $f_2:X\rightarrow \BG_m$ such that $\pr_1\circ f_1=\lambda\circ f_2$, there is a unique morphism $f:X\rightarrow \BG_m\times G_0$ such that $(\lambda\times\Id)\circ f=f_1$ and $q_2\circ f=f_2$, which is given by $x\mapsto (f_2(x),f_1^{(2)}(x))$.

Under the isomorphism $(T_0/Z_0)\times G_0\cong (T_0\times G_0)/Z_0^{\Delta}$, the morphism $q_1^{\prime}$ induces 
the corresponding morphism $q_1:\BG_m\times G_0\rightarrow (T_0\times G_0)/Z_0^{\Delta}$ is given by $(a,g)\mapsto [(\lambda(a),\lambda(a)g)]$. Then we obtain the following fiber product diagram:
\begin{align}\label{diag-2}
    \begin{array}[c]{ccc}
\BG_m\times G_0 &\stackrel{q_1}{\longrightarrow} & (T_0\times G_0)/Z_0^{\Delta}\\
\downarrow\scriptstyle{q_2}&&\downarrow\scriptstyle{\pr_1}\\
\BG_m&\stackrel{\lambda}{\longrightarrow}& T_\ad=T_0/Z_0
\end{array}
\end{align}
with $\BG_m\times G_0$ as the fiber product. Moreover, using the above identification, for any variety $X$ with $f_1=(f_1^{(1)},f_1^{(2)}):X\rightarrow  (T_0\times G_0)/Z_0^{\Delta}$ and $f_2:X\rightarrow \BG_m$ such that $\pr_1\circ f_1=\lambda\circ f_2$, there is a unique morphism $f:X\rightarrow \BG_m\times G_0$ such that $q_1\circ f=f_1$ and $q_2\circ f=f_2$ and is given by $x\mapsto (f_2(x),f_1^{(1)}(x)^{-1}\cdot f_1^{(2)}(x))$.  It remains to determine the multiplication structure explicitly. 

For given $(a_1,g_1),(a_2,g_2)\in \BG_m\times G_0$, we have that 
\[
q_2((a_1,g_1))\cdot q_2((a_2,g_2))=a_1a_2,
\]
and
\[
q_1((a_1,g_1))\cdot q_1((a_2,g_2))=[(\lambda(a_1),\lambda(a_1)g_1)]\cdot[(\lambda(a_2),\lambda(a_2)g_2)]=[(\lambda(a_1)\lambda(a_2),\lambda(a_1)g_1\lambda(a_2)g_2)],
\]
where the first multiplication is in $\BG_m$ and the second multiplication is in $(T_0\times G_0)/Z_0^{\Delta} $.
In this way, we get two morphisms
\[
(\BG_m\times G_0)\times (\BG_m\times G_0)\rightarrow \BG_m:((a_1,g_1),(a_2,g_2))\mapsto a_1a_2
\]
and
\[
(\BG_m\times G_0)\times (\BG_m\times G_0)\rightarrow (T_0\times G_0)/Z_0^{\Delta}:((a_1,g_1),(a_2,g_2))\mapsto [(\lambda(a_1)\lambda(a_2),\lambda(a_1)g_1\lambda(a_2)g_2)],
\]
whose corresponding morphism $(\BG_m\times G_0)\times (\BG_m\times G_0)\rightarrow (\BG_m\times G_0)$ is given by
\begin{align}\label{product-1}
    (a_1,g_1)\cdot(a_2,g_2)=(a_1a_2,\lambda(a_1a_2)^{-1}\lambda(a_1)g_1\lambda(a_2)g_2)=(a_1a_2,\lambda(a_2)^{-1}g_1\lambda(a_2)g_2)
\end{align}
according to the last paragraph.

As the fiber product of the fiber product diagram in \eqref{diag-2}, the group $\BG_m\times G_0$ with the 
multiplication given in \eqref{product-1} is isomorphic to the group $G_\lam$ because of the universality. 
\end{proof}

From the proof of Lemma \ref{lem_G_lambda}, it is easy to obtain that when the co-character $\lambda\colon \BG_m\rightarrow T_0/Z_0$ factors through $\BG_m\rightarrow T_0\rightarrow T_0/Z_0$, 
or equivalently, the representation $\rho_{\lambda}$ can factor through the adjoint group $G^{\vee}_\ad$, 
the unit group $G_\lam$ is isomorphic to the direct product $\BG_m\times G_0$, which is explicitly given by 
\begin{align}\label{eq:ad}
    G_{\lambda}\cong \BG_m\times G_0\quad {\rm with}\ (a,g)\mapsto (a,\lam(a)g).
\end{align}
For convenience, we state it as a corollary. 

\begin{cor}\label{cor:ad}
    If the representation $\rho_{\lambda}$ can factor through the adjoint group $(G^{\vee})_\ad$, then 
    the unit group $G_\lam$ is isomorphic to the direct product $\BG_m\times G_0$, which is explicitly given as in 
    \eqref{eq:ad}.
\end{cor}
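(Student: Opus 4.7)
The plan is to prove the corollary in two steps: first, translate the dual-group hypothesis into a statement that the co-character $\lambda$ lifts into $T_0$, and then use this lift to produce an explicit isomorphism $G_\lambda\cong\BG_m\times G_0$ by a $\lambda$-twist.

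The first step is a piece of root-data bookkeeping. The hypothesis that $\rho_{\lambda}$ factors through $(G^\vee)_\ad=G^\vee/Z^\vee$ is equivalent to saying that the highest weight $\lambda\in X^*(T^\vee)$ is trivial on $Z^\vee$, hence lies in the root lattice of $G^\vee$, which under the identification $X^*(T^\vee)=X_*(T)$ is the coroot lattice of $G$. Since $G_0$ is simply connected by Definition \ref{dfn:BKN}(3), this coroot lattice equals $X_*(T_0)$, so the co-character $\lambda\colon\BG_m\to T$ factors through $T_0\hookrightarrow T$. This gives a homomorphism $\wt\lambda\colon\BG_m\to T_0\subset G_0$ whose composition with the projection $T_0\twoheadrightarrow T_0/Z_0$ recovers the co-character $\lambda\colon\BG_m\to T_0/Z_0$ used in the construction of $G_\lambda$.

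Using the presentation $G_\lambda=\BG_m\ltimes_{\lambda}G_0$ of Lemma \ref{lem_G_lambda}, I would define
\[
\Psi\colon G_\lambda\longrightarrow\BG_m\times G_0,\qquad(a,g)\longmapsto(a,\wt\lambda(a)g),
\]
with the target given the direct product structure, and verify that $\Psi$ is an isomorphism of algebraic groups. Since conjugation by $\lambda(a)\in T_0/Z_0$ on $G_0$ agrees with conjugation by the lift $\wt\lambda(a)\in T_0$, and since $T_0$ is abelian, a direct computation using \eqref{multGlam} gives
\[
\Psi\bigl((a_1,g_1)(a_2,g_2)\bigr)=(a_1a_2,\wt\lambda(a_1)g_1\wt\lambda(a_2)g_2)=\Psi(a_1,g_1)\cdot\Psi(a_2,g_2),
\]
so $\Psi$ is a homomorphism. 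Its inverse is the evident morphism $(a,h)\mapsto(a,\wt\lambda(a)^{-1}h)$, so $\Psi$ is an isomorphism.

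There is no substantial obstacle here. The only point to watch is keeping straight the two roles of the symbol $\lambda$, as a highest weight in $X^*(T^\vee)$ and as a co-character $\BG_m\to T_0/Z_0$ in the Vinberg construction; once the lift $\wt\lambda$ to $T_0$ is in place, the isomorphism is essentially forced by the cancellation of the twist in the semidirect-product multiplication.
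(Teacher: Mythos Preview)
Your proposal is correct and follows essentially the same approach as the paper: the paper simply remarks (in the paragraph preceding the corollary) that the hypothesis on $\rho_\lambda$ is equivalent to the co-character $\lambda\colon\BG_m\to T_0/Z_0$ lifting through $T_0$, and then records the isomorphism $(a,g)\mapsto(a,\lambda(a)g)$ without further verification. You have supplied the root-data justification for the lift and the explicit check that $\Psi$ is a homomorphism, which the paper omits.
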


For any $\bkn$-triple $\Delta_{\bkn}(\Fd):=(G,G^\vee,\rho)_\Fd$ with $\rho=\rho_\lam$, one checks easily that the natural homomorphism
\[
    G^\vee \to H_\rho^\vee\cong(\BG_m \times G^\vee) / \{ (\lambda(z)^{-1},z) \mid z \in Z^\vee \}, \quad g \mapsto [(1,g)]
\]
is an isomorphism. By Corollary \ref{cor:Hrhosc} and Lemma \ref{lem_G_lambda}, we obtain the following 

\begin{cor}\label{ug-VM}
For a given $\bkn$-triple $\Delta_{\bkn}(\Fd):=(G,G^\vee,\rho)_\Fd$ with $\rho=\rho_\lam$, the group $G$ is isomorphic 
to the unit group $G_\lam=G_\rho=G(\CM_\rho)$, where $\CM_\rho=\CM_\lam$ is the $L$-monoid as constructed via the Vinberg method in \eqref{rhoM}.
\end{cor}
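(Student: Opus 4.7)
The plan is to deduce the corollary by combining three ingredients already established: Corollary \ref{cor:Hrhosc}, Lemma \ref{lem_G_lambda}, and an isomorphism $G^\vee\cong H_\rho^\vee$ that holds specifically under the BKN assumptions. Since Corollary \ref{cor:Hrhosc} computes $H_\rho\cong\BG_m\ltimes_\lambda G_0^\rsc$ and Lemma \ref{lem_G_lambda} computes $G_\lambda\cong\BG_m\ltimes_\lambda G_0$ with exactly the same multiplication rule, the corollary reduces to (a) checking the identification $G\cong H_\rho$, and (b) checking that $G_0^\rsc=G_0$, which is immediate from the BKN condition that $G_0=[G,G]$ is simply connected.

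The main step, then, is to verify that the natural map $G^\vee\to H_\rho^\vee$ sending $g\mapsto[(1,g)]$ is an isomorphism, using the model $H_\rho^\vee\cong(\BG_m\times G^\vee)/\{(\lambda(z)^{-1},z)\mid z\in Z^\vee\}$ from Proposition \ref{prp:kernelfp}. First I would extract the structure of $Z^\vee$ from the BKN hypotheses: dualizing the exact sequence $1\to G_0\to G\xrightarrow{\Fd}\BG_m\to 1$ gives $1\to\BG_m\xrightarrow{\Fd^\vee}G^\vee\to G_0^\vee\to 1$, and since $G_0$ is simply connected, $G_0^\vee$ is adjoint, so the kernel $\Fd^\vee(\BG_m)$ of the projection $G^\vee\to(G^\vee)_\ad=G_0^\vee$ is precisely the center $Z^\vee$. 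A short connectedness argument (conjugation on $\Fd^\vee(\BG_m)$ factors through the connected $G_0^\vee$ into the discrete $\Aut(\BG_m)=\BZ/2$) shows this image is central. Next, BKN condition (4) asserts $\rho(\Fd^\vee(a))=a\cdot\RI_{V_\rho}$, so the central character $\lambda|_{Z^\vee}$ satisfies $\lambda\circ\Fd^\vee=\mathrm{id}_{\BG_m}$; in particular $\lambda\colon Z^\vee\to\BG_m$ is an isomorphism.

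With these two facts in hand, injectivity and surjectivity of $g\mapsto[(1,g)]$ are short verifications: the relation $[(1,g_1)]=[(1,g_2)]$ forces $\lambda(z)=1$ for the witnessing $z\in Z^\vee$, hence $z=1$ and $g_1=g_2$; and given $(a,g)\in\BG_m\times G^\vee$, taking $z=\lambda^{-1}(a^{-1})\in Z^\vee$ gives $(a,g)\sim(1,g\lambda^{-1}(a))$. This shows $G^\vee\cong H_\rho^\vee$, hence $G\cong H_\rho$ after dualizing. Then Corollary \ref{cor:Hrhosc} combined with $G_0^\rsc=G_0$ yields $H_\rho\cong\BG_m\ltimes_\lambda G_0$ with the multiplication \eqref{mf0}, which matches the multiplication \eqref{multGlam} from Lemma \ref{lem_G_lambda} for $G_\lambda$ on the nose. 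Composing the three isomorphisms gives $G\cong G_\lambda=G(\CM_\rho)$.

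The main obstacle is the bookkeeping in step (a): one must make sure the identifications are really as algebraic groups with matching multiplication, not just as abstract groups or varieties. In particular, tracing the co-character $\lambda$ through the compositions $\BG_m\to T_0\to T_0/Z_0$ on the $G_\lambda$ side and through $\Fd^\vee$ and the quotient $(\BG_m\times G^\vee)/(\lambda^{-1},\Id)(Z^\vee)$ on the $H_\rho^\vee$ side must be done consistently so that the two semidirect product structures literally agree. Once that alignment is checked, no further geometry is required—the normality of $\CM_\rho$ and the identification of unit groups with $G_\lambda$ are already part of the Vinberg construction summarized around \eqref{rhoM} and \eqref{Glambda}.
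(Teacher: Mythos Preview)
Your proposal is correct and follows essentially the same route as the paper: the paper's argument consists of the one-line observation that for a BKN-triple the natural map $G^\vee\to H_\rho^\vee$, $g\mapsto[(1,g)]$, is an isomorphism, and then invokes Corollary~\ref{cor:Hrhosc} and Lemma~\ref{lem_G_lambda} exactly as you do. Your write-up simply supplies the details (that $Z^\vee=\Fd^\vee(\BG_m)$ and $\lambda\circ\Fd^\vee=\mathrm{id}$ from BKN conditions (2)--(4)) that the paper leaves as ``one checks easily.''
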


\subsection{Construction via the Putcha-Renner method}\label{ssec-CPRM}
For a given $\bkn$-triple $\Delta_{\bkn}(\Fd):=(G,G^\vee,\rho)_\Fd$ with $\rho=\rho_\lam$ as before, we are going to recall the construction via the Putcha-Renner method the monoid $\CM^\rho$ and show that $G=G(\CM^\rho)$, the unit group of the monoid $\CM^\rho$. 

A series of papers by Putcha \cite{Put84, Put88}  and Renner \cite{Ren85a, Ren85b, Ren89, Ren90} classified the reductive monoid characterized by the toric embedding of maximal tori. The survey \cite{Ren05} is also a good reference. \cite{Ngo} gives an explicit construction over not necessarily algebraically closed fields.

Following the notations in \cite{Ngo}, we denote by $\Omega(\rho)$ the set of weights of $\rho$ and by $\xi(\rho)$ the convex cone generated by $\Omega(\rho)$. According to \cite[Proposition 5.1]{Ngo20}, since the central character of $\rho$ is the scalar multiplication, it follows that $\xi(\rho)$ is strictly convex. Let $\xi(\rho)^{\vee}$ be the sub-monoid of $X^*(T)$ consisting of $\alpha\in X^*(T)$ such that the restriction of $\alpha$ to $\Omega(\rho)$ takes non-negative values. Let $\alpha_1,\cdots,\alpha_k\in X^*(T)$ such that their orbits under $W$ generate $\xi(\rho)^{\vee}$. By replacing $\alpha_i$ by a $W$-conjugate if necessary, we may assume that each $\alpha_i$ lies in the positive Weyl chamber. Let $\omega_{\alpha_i}:G\rightarrow \GL(V_{d_i})$ be the representation of $G$ of highest weight $\alpha_i$, where $d_i$ is the dimension of the representation. Then the monoid $\CM^{\rho}$ is defined as the normalization of the closure of the image $G$ under
\[
\prod_{i=1}^k\omega_{\alpha_i} :G\rightarrow  \prod_{i=1}^k\End(V_{d_i}).
\]
According to \cite[Proposition 5.1]{Ngo}, the monoid $\CM^{\rho}$ is independent of the choice of the generators $\alpha_1,\cdots,\alpha_k$ and is characterized by the property that for every maximal torus $T$ of $G$, the normalization of its closure in $\CM^{\rho}$ corresponds to the strictly convex cone generated by the set of weights $\Omega(\rho)$ in the sense that the ring of regular functions on this toric embedding is $k[\xi(\rho)^{\vee}]$.

\subsection{Proof of Theorem \ref{thm:monoids}}\label{ssec-PThmM}
For a given $\bkn$-triple $\Delta_{\bkn}(\Fd):=(G,G^\vee,\rho)_\Fd$ with $\rho=\rho_\lam$ as before, we constructed 
in Section \ref{ssec-CVM} a monoid $\CM_\rho$ with $G=G(\CM_\rho)$, the unit group of $\CM_\rho$, and in Section 
\ref{ssec-CPRM} a monoid $\CM^\rho$ with $G=G(\CM^\rho)$, the unit group of $\CM^\rho$. To prove Theorem \ref{thm:monoids}, it is enough to prove that $\CM_\rho\cong\CM^\rho$. 

We regard $\lambda$ as an dominant element in $X_*(T_{0, \ad})=X^*(T_0^{\vee, \rsc })$ as in Section \ref{ssec-CVM}. Recall from Corollary \ref{ug-VM} that $G\cong G_{\lambda}=\BG_m\ltimes_{\lambda}G_0$. If $\lambda=0$, then it's easy to check the result. In the following, we assume $\lambda \neq 0$. 

Take the maximal spit torus $T_\lambda= \BG_m \times T_0$ then recall the following commutative diagram
\[\begin{tikzcd}
	T_\lambda=\BG_m \times T_0 & G_{\lambda}=\BG_m\ltimes_{\lambda}G_0 & M_\rho & \mathbb{A}^1 \\
	T_0^+=(T_0\times T_0)/Z_0^{\Delta} & G_0^+ & M^+ & \prod_{\alpha \in \Delta}\mathbb{A}^1
	\arrow[from=1-1, to=1-2]
	\arrow[from=1-1, to=2-1]
	\arrow[from=1-2, to=1-3]
	\arrow[from=1-2, to=2-2]
	\arrow[from=1-3, to=1-4]
	\arrow[from=1-3, to=2-3]
	\arrow["\lambda", from=1-4, to=2-4]
	\arrow[from=2-1, to=2-2]
	\arrow[from=2-2, to=2-3]
	\arrow["{\pi^+}"', from=2-3, to=2-4]
\end{tikzcd}\]
where every square is Cartesian and the left vertical arrow is given by
\[
    \BG_m \times T_0 \to (T_0\times G_0)/Z_0^{\Delta}, \quad (a, t) \mapsto [\lambda(a), \lambda(a)t].
\]
Recall $\CM^+$ is the normalization of the image of the following map
\begin{align*}
    \iota: G_0^+= (T_0\times G_0)/Z_0^{\Delta} &\longrightarrow (\prod_{\alpha \in \Delta}\mathbb{A}^1) \times (\prod_{\ome \in \widehat{\Delta}}\End(V_\ome)), \\
    [t, g] &\longmapsto (\prod_{\alpha \in \Delta}\alpha(t)) \times (\prod_{\ome \in \widehat{\Delta}}\ome(w_0(t^{-1}))\rho_\ome(g)) .
\end{align*}
Denote by $\xi^+ \subset X^*(T_0^+) \otimes \mathbb{R}$ the convex cone generated by all of the weights in $\iota$. Let $\overline{T}_0^+$ denote the Zariski closure of $T_0^+$ in  $M^+$. According to \cite[Theorem 5.4]{Ren05}, $\overline{T}_0^+$ is a normal affine toric varieties. It follows from \cite[Section 3.3]{Ren05} that $\overline{T}_0^+=\Spec\; k[X(\overline{T}_0^+)]$, where the character lattice of $\overline{T}_0^+$ is given by
\[
    X(\overline{T}_0^+)=\xi^+ \cap X^*(T_0^+).
\]
Let $\overline{T}_\lambda$ denote the Zariski closure of $T_\lambda$ in $\CM_\rho$. Note $\lambda \neq 0$, then the commutative diagram
\[\begin{tikzcd}
	\overline{T}_\lambda & \mathbb{A}^1 \\
	\overline{T}_0^+ & \prod_{\alpha \in \Delta}\mathbb{A}^1
	\arrow[from=1-1, to=1-2]
	\arrow[from=1-1, to=2-1]
	\arrow["\lambda", from=1-2, to=2-2]
	\arrow[from=2-1, to=2-2]
\end{tikzcd}\]
is Cartesian.

Identify the character lattice $X^*(T_\lambda)$ of $T_\lambda=\BG_m \times T_0$ with $\mathbb{Z}\oplus X^*(T_0)$. Denote by $\xi_\lambda \subset X^*(T_\lambda)\otimes_{\mathbb{Z}}\mathbb{R}$ the convex cone generated by the following characters
\[
    (1, 0) \text{ and } (\langle \ome^\prime-w_0 \ome, \lambda \rangle, \ome^\prime), \ome \in \widehat{\Delta}.
\]
Here $\ome^\prime$ denotes any weight in $\rho_\ome$ and $\langle \  , \  \rangle$ denotes the pairing between $X^*(T_{0, \ad})$ and $X_*(T_{0, \ad})$. Note $\ome^\prime-w_0 \ome \in X^*(T_{0, \ad})$, so the pairing is well-defined.

By the Cartesian diagram above, one calculates directly that $\overline{T}_\lambda=\Spec\; k[X(\overline{T}_\lambda)]$, where the character lattice of $\overline{T}_\lambda$ is given by
\[
    X(\overline{T}_\lambda)=\xi_\lambda \cap X^*(T_\lambda).
\]

On the other hand, by the same arguments as in Proposition \ref{prp:es}, we have the Langlands dual group of $G \cong G_\lambda$ is
\[
    G^\vee_\lambda= \BG_m \times_{(\lambda^{-1}, \Id)(Z_0^{\vee, \rsc})}G_0^{\vee, \rsc} \cong G^\vee.
\]
The isomorphism is induced from $\Fd^\vee\colon \BG_m\to G^\vee$ and the homomorphism $G_0^{\vee, \rsc} \to G^\vee$, which is dual to the quotient $G \to G_\ad=G_{0, \ad}$. Under this isomorphism, we may write the representation $\rho$ of $G^\vee$ as the representation of $G^\vee_\lambda$ given by
\[
    G^\vee_\lambda= \BG_m \times_{(\lambda^{-1}, \Id)(Z_0^{\vee, \rsc})}G_0^{\vee, \rsc} \to \GL(V_\rho), \quad (a, g) \mapsto a \I_{V_\rho} \cdot  \rho^\prime_\lambda(g),
\]
where $\rho^\prime_\lambda$ denotes the highest weight representation of $G_0^{\vee, \rsc}$ associated with $\lambda \in X^*(T_0^{\vee, \rsc })$.

Take the maximal torus $\BG_m \times_{(\lambda^{-1}, \Id)(Z_0^{\vee, \rsc})}T_0^{\vee, \rsc}$ which has the following isomorphism
\begin{align*}
    \BG_m \times_{(\lambda^{-1}, \Id)(Z_0^{\vee, \rsc})}T_0^{\vee, \rsc} &\cong \BG_m \times (T_0^{\vee, \rsc})_{\ad}= \BG_m \times T_0^\vee =T_\lambda^\vee \\
    [a, t] &\mapsto (\lambda(t)a, [t]).
\end{align*} 
Identify the character lattice $X^*(\BG_m \times_{(\lambda^{-1}, \Id)(Z_0^{\vee, \rsc})}T_0^{\vee, \rsc})$ with $\mathbb{Z} \oplus X^*(T_0^\vee)$. Then the weights of $\rho_\lambda$ are given by
\[
    (1, \lambda^\prime- \lambda)
\]
where $\lambda^\prime \in X^*(T_0^{\vee, \rsc})$ denotes any weight $\rho^\prime_\lambda$. Note $\lambda^\prime- \lambda \in X^*(T_0^\vee)$. So
$\xi(\rho) \subset X_*(T_\lambda)\otimes_{\mathbb{Z}} \mathbb{R}$ is equal to the convex cone generated by the weights above. Let $\xi(\lambda)^\vee$ be its dual cone. 

Let $\overline{T}^\lambda$ denote the Zariski closure of $T_\lambda$ in $\CM^\rho$. Similarly $\overline{T}^\lambda$ is the normal affine toric variety $\Spec \;k[X(\overline{T}^\lambda)]$, where
\[
    X(\overline{T}^\lambda)=\xi(\lambda)^\vee \cap X^*(T_\lambda)=\xi(\rho)^\vee.
\]
In the following, we prove that $\xi_\lambda=\xi(\lambda)^\vee$, which implies that $\overline{T}_\lambda \cong \overline{T}^\lambda$. Then by \cite[Theorem 5.4]{Ren05} this isomorphism extends to an isomorphism $\CM_\rho \cong \CM^\rho$.

\begin{prp}
    With the notations introduced above, then $\xi_\lambda=\xi(\lambda)^\vee$ holds.
\end{prp}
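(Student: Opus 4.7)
The plan is to prove $\xi_\lambda=\xi(\lambda)^\vee$ by establishing both inclusions as cones in $X^*(T_\lambda)\otimes\mathbb{R}=\mathbb{R}\oplus X^*(T_0)\otimes\mathbb{R}$.

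For the inclusion $\xi_\lambda\subseteq\xi(\lambda)^\vee$, I would pair each generator of $\xi_\lambda$ against each weight $(1,\lambda'-\lambda)$ of $\rho$, with $\lambda'\in\Omega(\rho'_\lambda)$. The generator $(1,0)$ pairs to $1$. For $(\langle\omega'-w_0\omega,\lambda\rangle,\omega')$ with $\omega\in\widehat{\Delta}$ and $\omega'\in\Omega(\rho_\omega)$, a short cancellation reduces the required inequality to $\langle\omega',\lambda'\rangle\geq\langle w_0\omega,\lambda\rangle$. Since $\omega'\in\mathrm{conv}(W\omega)$ and $\lambda'\in\mathrm{conv}(W\lambda)$, and a bilinear form over a product of compact polytopes attains its minimum at a pair of vertices, we have
\[
\min_{w_1,w_2\in W}\langle w_1\omega,w_2\lambda\rangle=\min_{w\in W}\langle\omega,w\lambda\rangle=\langle\omega,w_0\lambda\rangle=\langle w_0\omega,\lambda\rangle,
\]
where the first equality uses $W$-equivariance of the pairing, the second uses dominance of $\omega$ and $\lambda$, and the third uses $w_0^{-1}=w_0$.

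For the reverse inclusion $\xi(\lambda)^\vee\subseteq\xi_\lambda$, given $(\alpha_1,\alpha_2)\in\xi(\lambda)^\vee$, I would choose $w\in W$ so that $\alpha_2^+:=w^{-1}\alpha_2$ is dominant in $X^*(T_0)\otimes\mathbb{R}$, and write $\alpha_2^+=\sum_i a_i\omega_i$ with $a_i\geq 0$. Then $\alpha_2=\sum_i a_i(w\omega_i)$ with each $w\omega_i\in\Omega(\rho_{\omega_i})$, and the candidate presentation is
\[
(\alpha_1,\alpha_2)=c_0\cdot(1,0)+\sum_i a_i\cdot(\langle w\omega_i-w_0\omega_i,\lambda\rangle,w\omega_i),
\]
where $c_0:=\alpha_1-\sum_i a_i\langle w\omega_i-w_0\omega_i,\lambda\rangle$. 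A direct computation yields $\sum_i a_i\langle w\omega_i-w_0\omega_i,\lambda\rangle=\langle\alpha_2-w_0\alpha_2^+,\lambda\rangle$. On the other hand, since the weight polytope of $\rho'_\lambda$ is $\mathrm{conv}(W\lambda)$, a linear-function minimization on its vertices gives $\min_{\lambda'\in\Omega(\rho'_\lambda)}\langle\alpha_2,\lambda'\rangle=\langle w_0\alpha_2^+,\lambda\rangle$, whence $\max_{\lambda'}\langle\alpha_2,\lambda-\lambda'\rangle=\langle\alpha_2-w_0\alpha_2^+,\lambda\rangle$. The defining inequality of $\xi(\lambda)^\vee$ then forces $c_0\geq 0$, exhibiting $(\alpha_1,\alpha_2)$ as a non-negative combination of generators of $\xi_\lambda$.

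The main obstacle is the reverse inclusion, specifically the matching of the combinatorial sum $\sum_i a_i\langle w\omega_i-w_0\omega_i,\lambda\rangle$ from the Weyl-reduced decomposition of $\alpha_2$ to the exact dual-cone bound $\max_{\lambda'}\langle\alpha_2,\lambda-\lambda'\rangle$. This identification rests on the $W$-equivariance of the pairing together with the classical fact that the extremal weights of $\rho'_\lambda$ form exactly the orbit $W\lambda$. Once these two ingredients are invoked, both inclusions follow and $\xi_\lambda=\xi(\lambda)^\vee$ is established.
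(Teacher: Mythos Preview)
Your proof is correct and follows essentially the same approach as the paper's: both inclusions are established by pairing generators against weights, reducing to the inequality $\langle\omega',\lambda'\rangle\geq\langle w_0\omega,\lambda\rangle$ for the forward direction and choosing the extremal weight $\lambda'=ww_0\lambda$ for the reverse direction, with the same decomposition $\alpha_2^+=\sum a_i\omega_i$ yielding the required non-negative coefficient $c_0$. The only cosmetic difference is that for $\xi_\lambda\subseteq\xi(\lambda)^\vee$ you argue via minimization of a bilinear form over vertex pairs of the two weight polytopes, whereas the paper bounds $\langle\omega',\lambda'\rangle$ using only the convex hull for $\lambda'$ and then invokes the root-lattice expansion $\omega-w_0w^{-1}\omega'=\sum_{\alpha\in\Delta}n_\alpha\alpha$ together with $-w_0\Delta=\Delta$; both routes land on the same inequality.
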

For the proof, we notice the isogeny $T_0 \to T_{0, \ad}$ induces isomorphisms
\[
    X^*(T_{0, \ad})\otimes_\mathbb{Z} \mathbb{R} \cong X^*(T_0)\otimes_\mathbb{Z} \mathbb{R}, \quad X_*(T_0)\otimes_\mathbb{Z} \mathbb{R} \cong X_*(T_{0, \ad})\otimes_\mathbb{Z} \mathbb{R}.
\]
So we can use the same notation $\langle \  , \  \rangle$ to denote the pairing between $X^*(T_{0, \ad})\otimes_\mathbb{Z} \mathbb{R} \cong X^*(T_0)\otimes_\mathbb{Z} \mathbb{R}$ and $X_*(T_0)\otimes_\mathbb{Z} \mathbb{R} \cong X_*(T_{0, \ad})\otimes_\mathbb{Z} \mathbb{R}$. If the context is clear, we may also use $\langle \  , \  \rangle$ to denote the pairing between 
\[
X^*(T_\lambda)\otimes_{\mathbb{Z}}\mathbb{R}=\mathbb{R}\oplus X^*(T_0)\otimes_\mathbb{Z} \mathbb{R}
\quad {\rm and}\quad 
X_*(T_\lambda)\otimes_{\mathbb{Z}}\mathbb{R}=\mathbb{R}\oplus X_*(T_0)\otimes_\mathbb{Z} \mathbb{R}.
\]

\begin{proof}
    We first prove that $\xi_\lambda \subset \xi(\lambda)^\vee$. By definition, we have that 
    \[
        \langle (1, 0), (1, \lambda^\prime-\lambda) \rangle=1 > 0,
    \]
    and
    \begin{align*}
        \langle (\langle \ome^\prime-w_0 \ome, \lambda \rangle, \ome^\prime), (1, \lambda^\prime- \lambda) \rangle 
        =\langle \ome^\prime-w_0 \ome, \lambda \rangle + \langle \ome^\prime, \lambda^\prime-\lambda \rangle= \langle \ome^\prime, \lambda^\prime \rangle- \langle w_0 \ome, \lambda \rangle.
    \end{align*}  
    Note that any weight in the highest weight representation associated with $\lambda$ lies in the convex hull of $\{ W \lambda \}$. It follows that for a fixed $\ome^\prime$, one has
    \[
        \langle \ome^\prime, \lambda^\prime \rangle \geq \langle \ome^\prime, w \lambda\rangle=\langle w^{-1}\ome^\prime, \lambda \rangle
    \]
    for certain $w \in W$, which implies that 
    \[
        \langle (\langle \ome^\prime-w_0 \ome, \lambda \rangle, \ome^\prime), (1, \lambda^\prime- \lambda) \rangle \geq \langle w_0(w_0 w^{-1}\ome^\prime-\ome), \lambda \rangle.
    \]
    Since $\ome$ is the highest weight in $\rho_\ome$, we have
    \[  
        \ome - w_0 w^{-1}\ome^\prime =\sum_{\alpha \in \Delta} n_\alpha \alpha, \quad n_\alpha \geq 0
    \]
    and 
    \[
        w_0(w_0 w^{-1}\ome^\prime-\ome)=\sum_{\alpha \in \Delta} n_\alpha (-w_0\alpha).
    \]
    Since $-w_0 \Delta =\Delta$ and $\lambda$ is dominant we obtain that $\langle w_0(w_0 w^{-1}\ome^\prime-\ome), \lambda \rangle \geq 0$, which implies that $\xi_\lambda \subset \xi(\lambda)^\vee$.

    It remains to prove that $\xi(\lambda)^\vee \subset \xi_\lambda$. Suppose that $(a, \mu^\prime) \in \xi(\lambda)^\vee$. Then $\mu^\prime=w \mu$ for certain $w \in W$ and dominant $\mu \in X_*(T_0)\otimes_\mathbb{Z} \mathbb{R}$. It follows that 
    \[
        \langle (a, \mu^\prime), (1, \lambda^\prime-\lambda)\rangle=a+ \langle w \mu, \lambda^\prime-\lambda \rangle= a+ \langle  \mu, w^{-1} \lambda^\prime \rangle- \langle w\mu, \lambda \rangle \geq 0.
    \]
    By taking that $\lambda^\prime=w w_0 \lambda$, we obtain that 
    \[
        a \geq \langle w\mu, \lambda \rangle- \langle  \mu, w_0 \lambda \rangle= \langle w \mu-w_0 \mu, \lambda \rangle.
    \]
    Since $\mu$ is dominant, we write that $\mu =\sum_{\ome \in \widehat{\Delta}}m_\ome \ome$ for $m_\ome \geq 0$. 
    This implies 
    \[
        (a, \mu^\prime)=(a-\langle w \mu-w_0 \mu, \lambda \rangle)(1, 0)+ \sum_{\ome \in \widehat{\Delta}}m_\ome (\langle w\ome-w_0 \ome, \lambda \rangle, w\ome) \in \xi_\lambda.
    \]

\end{proof}

\section{Examples}\label{sec-Ex}

We provide the explicit $\bkn$-triples $\Delta_\fp=(H_\rho,H_\rho^\vee,\rho_\fp)_{\Fd_\fp}=\Delta_\bkn(\Fd_\fp)$ for some examples of $L$-triples $(G,G^\vee,\rho)$ in order to illustrate the discussions above. 

\subsection{Symmetric powers of $\GL_2$}\label{ssec-symnGL2}
We take $(G,G^\vee,\rho)$ to be $(\GL_2,\GL_2,\sym^n)$. Since $\rho=\sym^n$, the symmetric $n$-th power of $\GL_2$, we must have that the highest weight $\lam_n=n\ome_1$, where $\ome_1$ is the first fundamental weight of $\GL_2(\BC)$.

In the case $G=\GL_2$, we have that $G_0=\SL_2$. We take $T$ to be the maximal torus of $\GL_2$ consisting of all diagonal matrices, then $T_0=T\cap\SL_2$ consisting of all diagonal matrices in $\SL_2$, and then $Z_0=\{\pm\RI_2\}\subset T_0$. Since the co-character is given by 
    \[
    \lambda=\lambda_n\colon \BG_m\rightarrow T_0/Z_0\cong\BG_m, \quad x\mapsto x^n,
    \]
We have that $H_\rho=\BG_m\times_\lam\SL_2$. We conclude that $H_\rho\cong \BG_m\times \SL_2$ when $n$ is even and $H_\rho\cong\GL_2$ when $n$ is odd, 
which can be verified as follows. 

When $n$ is even, since $\lambda$ could lift to a co-character 
\[
\lam\colon \BG_m\rightarrow T_0\colon 
x\mapsto \begin{pmatrix}
    x^{n/2} & 0 \\ 0 & x^{-n/2}
\end{pmatrix}, 
\]
by Corollary \ref{cor:ad}, which is a special case of Lemma \ref{lem_G_lambda}, we have that $G_{\lambda}\cong\BG_m\times\SL_2$.

When $n$ is odd, there is an algebraic group isomorphism 
    \[
    \BG_m\ltimes_{\lambda}\SL_2\rightarrow \GL_2\colon (a,g)\mapsto \begin{pmatrix}
        a^{\frac{n+1}{2}} & 0\\0 & a^{\frac{1-n}{2}}
    \end{pmatrix}g
    \]
with inverse given by 
\[
g\mapsto \left( \det g, \begin{pmatrix}
     \det ^{\frac{-n-1}{2}} g& 0\\ 0& \det^{\frac{n-1}{2}} g
\end{pmatrix} g      \right).
\]

On the dual side, by Corollary \ref{cor:Hrhod}, we have that 
\[
G_{\lambda}^{\vee}=\BG_m\times\SL_2/\{(\pm 1)^n,\pm\RI_2\}. 
\]
When $n$ is even, we have that $(\pm1)^n=1$ and hence $G_\lam^\vee$ is isomorphic to $\BG_m\times\PGL_2$.
When $n$ is odd, we have the following isomorphism of algebraic groups:
\begin{align}\label{iso of dual of gl_2}
G_{\lambda}^{\vee}\cong \GL_2\colon (a,g)\mapsto a\RI_2\cdot g.
\end{align}

Finally, we consider the representation $\rho_\fp$. 
When $n$ is even, the representation is given by 
\begin{align}\label{rhoFne}
    \rho_\fp=\st_{\BG_m}\otimes \sym_n^*,
\end{align}
where $\st_{\BG_m}$ is the standard representation of $\BG_m$ acting by scalar, and $\sym_n^*$ is the induced symmetric $n$-th power on $\PGL_2$. When $n$ is odd, according to the isomorphism (\ref{iso of dual of gl_2}), the representation $\rho_\fp$ is given as follows. For any $g\in\GL_2$, choose $a \in\BG_m$ such that $a^2=\det g$, which implies that $a^{-1}g\in\SL_2$. 
Hence we obtain that 
\begin{align}\label{rhoFno}
    \rho_\fp(g)=a \cdot \sym^n(a^{-1}g).
\end{align}
Note that this is not the symmetric $n$-th power of $\GL_2$. We summarize the discussion as a proposition.

\begin{prp}\label{prp:GL2symn}
    For any integer $n\geq 1$, let $\lam=n\ome_1$ be the dominant weight for the symmetric $n$-th power representation 
    $\rho_\lam$ of $\GL_2$. Then the $\bkn$-triple $\Delta_\bkn=(H_\rho,H_\rho^\vee,\rho_\fp)_{\Fd_\fp}$ associated to the given $L$-triple $(\GL_2,\GL_2,\rho_\lam)$ is explicitly given as follows. 
\begin{enumerate}
    \item The group $H_\rho$ is given by 
    \[
    H_\rho=\BG_m\ltimes_{\lambda}\SL_2\cong
    \begin{cases}
        \BG_m\times\SL_2& {\rm if}\ n\ {\rm is\ even};\\
        \GL_2& {\rm if}\ n\ {\rm is\ odd}.
    \end{cases}
    \]
    \item The dual group $H_\rho^\vee$ is given by 
    \[
    H_\rho^\vee\cong
    \begin{cases}
        \BG_m\times\PGL_2& {\rm if}\ n\ {\rm is\ even};\\
        \GL_2& {\rm if}\ n\ {\rm is\ odd}.
    \end{cases}
    \]
    \item The representation $\rho_\fp$ is given by 
    \[
    \rho_\fp=
    \begin{cases}
        \st_{\BG_m}\otimes \sym_n^*\ {\rm as\ in\ \eqref{rhoFne}}, & {\rm if}\ n\ {\rm is\ even};\\
        \rho_\fp(g)=a \cdot \sym^n(a^{-1}g)\ {\rm as\ in\ \eqref{rhoFno}},& {\rm if}\ n\ {\rm is\ odd}.
    \end{cases}
    \]
    \item The generalized determinant $\Fd_\fp\colon H_\rho\longrightarrow\BG_m$ is given by 
 \[
  H_{\rho}\cong
    \begin{cases}
        \BG_m\times\SL_2: (a,g)\mapsto a , & {\rm if}\ n\ {\rm is\ even};\\
        \GL_2: g\mapsto \det g   ,& {\rm if}\ n\ {\rm is\ odd}.
    \end{cases}
    \]
\end{enumerate}
\end{prp}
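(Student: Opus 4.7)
The plan is to specialize the general structural results of Section~\ref{sec-BKNT} to the specific datum $(\GL_2, \GL_2, \sym^n)$. Here $G_0 = [G,G] = \SL_2$ is already simply connected, so $G_0^\rsc = \SL_2$; the center is $Z_0 = \{\pm I_2\}$; and the highest weight $\lambda = n\omega_1$ yields the co-character $\lambda\colon \BG_m \to T_0/Z_0 \cong \BG_m$ sending $x \mapsto x^n$. Corollary~\ref{cor:Hrhosc}(1) immediately supplies $H_\rho \cong \BG_m \ltimes_\lambda \SL_2$, so the main task is to recognize this semidirect product and its dual explicitly according to the parity of $n$.

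For part (1), when $n$ is even the co-character lifts to the genuine co-character $x \mapsto \diag(x^{n/2}, x^{-n/2})$ of $T_0$, so Corollary~\ref{cor:ad} gives $H_\rho \cong \BG_m \times \SL_2$. When $n$ is odd no such lift exists, but I would exhibit the map $\BG_m \ltimes_\lambda \SL_2 \to \GL_2$ given by $(a, g) \mapsto \diag(a^{(n+1)/2}, a^{(1-n)/2})\, g$, with the inverse stated in the paper; verifying it respects the twisted multiplication \eqref{mf0} reduces to the observation that $\diag(a^{(n+1)/2}, a^{(1-n)/2})$ and the formal square-root lift $\diag(a^{n/2}, a^{-n/2})$ differ by the central scalar $a^{(n-1)/2} I_2$, hence induce the same adjoint action on $\SL_2$. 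Part (4) then follows at once, since $\Fd_\fp$ is by construction the abelianization map \eqref{Fdfp}: it is projection onto the first factor in the even case and the determinant in the odd case.

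For part (2), I would compute $H_\rho^\vee$ directly from Corollary~\ref{cor:Hrhod} as $(\BG_m \times \GL_2)/\{(z^{-n}, zI_2) : z \in \BG_m\}$, using that the central character of $\sym^n$ is $zI_2 \mapsto z^n$. I then identify this quotient explicitly: for $n$ even, $[a, g] \mapsto (a\, \det(g)^{n/2}, [g])$ descends to an isomorphism onto $\BG_m \times \PGL_2$, while for $n$ odd, $[a, g] \mapsto a\, \det(g)^{(n-1)/2}\, g$ descends to an isomorphism onto $\GL_2$ (well-definedness on the quotient is a routine check in both cases). For part (3), the representation $\rho_\fp$ is transported from $\rho^\Fe([a,g]) = a \cdot \sym^n(g)$ of \eqref{rhoFe} through these isomorphisms. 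In the even case, inverting $b = a\det(g)^{n/2}$ yields $\rho_\fp(b, [g]) = b \cdot \det(g)^{-n/2}\sym^n(g)$, and the factor $\det^{-n/2}\sym^n$ descends to the well-defined representation $\sym_n^*$ of $\PGL_2$, giving \eqref{rhoFne}. In the odd case, for $h \in \GL_2$ one picks $a \in \BG_m$ with $a^2 = \det h$ and sets $g = a^{-1} h \in \SL_2$; this pair lies over $h$, and substitution yields $\rho_\fp(h) = a \cdot \sym^n(a^{-1} h)$, which is \eqref{rhoFno}.

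The only real subtlety is in the odd case of part (3): one must verify that the formula $h \mapsto a \cdot \sym^n(a^{-1} h)$ is independent of the choice of sign of $a$. This is precisely where the odd parity of $n$ enters, since $\sym^n(-g) = (-1)^n \sym^n(g) = -\sym^n(g)$ ensures $(-a)\,\sym^n((-a)^{-1}h) = a\,\sym^n(a^{-1}h)$. Every other step is routine bookkeeping with the explicit isomorphisms already recorded in Corollaries~\ref{cor:Hrhosc}, \ref{cor:Hrhod}, and \ref{cor:ad}.
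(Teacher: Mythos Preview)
Your proposal is correct and follows essentially the same approach as the paper. The only presentational difference is that for $H_\rho^\vee$ the paper works with the equivalent $\SL_2$-based quotient $\BG_m\times\SL_2/\{((\pm1)^n,\pm\RI_2)\}$ (so that the odd-case isomorphism to $\GL_2$ is simply $(a,g)\mapsto a\RI_2\cdot g$), whereas you stay with the $\GL_2$-based quotient coming directly from Corollary~\ref{cor:Hrhod}; your added well-definedness check for $\rho_\fp$ in the odd case is a detail the paper leaves implicit.
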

We also refer to \cite{Sha17, SS22, Luo24} for discussions of this case.

\subsection{The standard representation of $\GL_n$}\label{ssec-StdGLn}
    In this case, the given triple is $(\GL_n,\GL_n,\rho_\lam)$ with $\rho_{\lambda}$ being the standard representation of $\GL_n$ associated to the first fundamental weight $\ome_1$. 
    To explicate the associated $\bkn$-triple $\Delta_\bkn=(H_\rho,H_\rho^\vee,\rho_\fp)_{\Fd_\fp}$, we take $T$ to 
    be the maximal torus of $\GL_n$ consisting of all diagonal matrices, and $Z$ be the center of $\GL_n$ consisting of all scalar matrices. Then $T_0=T\cap\SL_n$ and $Z_0=Z\cap\SL_n=\mu_n$, where $\mu_n$ is the group of the $n$-th roots of unity.
Consider the following isomorphism:
    \[
    T_0/Z_0\cong T/Z\cong\BG_m^{n-1}\colon \diag(t_1,\cdots,t_n)\mapsto \left( \frac{t_1}{t_n},\cdots,\frac{t_{n-1}}{t_n}   \right).
    \]
The highest weight $\lambda=\ome_1$ of the standard representation defines a co-character: 
    \[
    \lambda\colon \BG_m\longrightarrow T/Z\cong \BG_m^{n-1},\quad t\mapsto \diag(t,1,\cdots,1)\mapsto (t,1,\cdots,1).
    \]
Then $H_\rho=\BG_m\ltimes_{\lambda}\SL_n$ with group law given by 
\begin{align*}
(\BG_m\ltimes_{\lambda}\SL_n)\times(\BG_m\ltimes_{\lambda}\SL_n)&\rightarrow\BG_m\ltimes_{\lambda}\SL_n:\\((a,g),(a^{\prime},g^{\prime}))&\mapsto (aa^{\prime}, \diag((a^{\prime})^{-1},1,\cdots,1)g\cdot \diag(a^{\prime},1,\cdots,1)\cdot g^{\prime}).
\end{align*}
As algebraic groups, we have the following isomorphism:
\[
\BG_m\ltimes_{\lambda}\SL_n\cong\GL_n\colon (a,g)\mapsto \diag(a,1,\cdots,1)\cdot g,
\]
with the inverse given by
\[
\GL_n\ni g\mapsto(\det g, \diag(\det{^{-1} g},1,\cdots,1)\cdot g)\in\BG_m\ltimes_{\lambda}\SL_n.
\]
On the dual side, we have that 
\begin{align*}
    G_{\lambda}^{\vee}
    &=\BG_m\times\SL_n/\{(\omega_{\lambda}(z)^{-1},z)\mid z\in\mu_n\subset\SL_n\}\\
    &=G_{\lambda}^{\vee}\\
    &=\BG_m\times\SL_n/\{(z^{-1},z)\mid z\in\mu_n\subset\SL_n\},
\end{align*}
which is isomorphic to $\GL_n$ via the map 
\[
G_{\lambda}^{\vee}\rightarrow\GL_n:[(a,g)]\mapsto ag.
\]
Finally according to the construction of $\rho_\fp$ in this case, it is still the standard representation of $\GL_n$.

To summarize the discussion above, we have
\begin{prp}\label{prp:stdn}
    Let $\lambda$ be the highest weight for the standard representation $\rho_{\lambda}$ of $\GL_n$. Then 
    the $\bkn$-triple $\Delta_\bkn=(H_\rho,H_\rho^\vee,\rho_\fp)_{\Fd_\fp}$ associated to the $L$-triple $(\GL_n,\GL_n,\rho_{\lambda})$ is explicitly given as follows.
    \begin{enumerate}
    \item The group $H_\rho$ is given by 
    \[
    H_\rho=\BG_m\ltimes_{\lambda}\SL_n\cong\GL_n
    \]
    \item The dual group $H_\rho^\vee$ is given by 
    \[
    H_\rho^\vee\cong\GL_n
    \]
    \item The representation $\rho_\fp$ is given by 
    \[
    \rho_\fp=\rho_{\lambda}=\st_{\GL_n},
    \]
    the standard representation of $\GL_n$.
    \item The generalized determinant $\Fd_\fp\colon H_\rho\longrightarrow\BG_m$ is given by 
    \[
        H_\rho \cong \GL_n \to \BG_m : g \mapsto \det(g).
    \]
\end{enumerate}
\end{prp}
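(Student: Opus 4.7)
The plan is to specialize the general BKN-triple construction of Section \ref{ssec-BKNT-FP} to the case $(G,G^\vee,\rho)=(\GL_n,\GL_n,\st)$. First I would fix $T$ to be the diagonal maximal torus of $\GL_n$ and $Z$ its center (scalar matrices), so that $T_0=T\cap\SL_n$ and $Z_0=\mu_n$. Under the natural isomorphism $T/Z\cong\BG_m^{n-1}$ sending $\diag(t_1,\ldots,t_n)$ to $(t_1/t_n,\ldots,t_{n-1}/t_n)$, the highest weight $\lambda=\omega_1$ of the standard representation gives the co-character $t\mapsto(t,1,\ldots,1)$. This is the essential datum needed to invoke the general constructions.

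Next, I would apply Corollary \ref{cor:Hrhosc} to get $H_\rho\cong\BG_m\ltimes_\lambda\SL_n$ with the twisted multiplication \eqref{mf0}, and then construct an explicit isomorphism onto $\GL_n$ by
\[
(a,g)\mapsto\diag(a,1,\ldots,1)\cdot g,
\]
with inverse $g\mapsto(\det g,\diag((\det g)^{-1},1,\ldots,1)\cdot g)$. Checking that this is a group homomorphism reduces to the identity $\diag(a_1a_2,1,\ldots,1)=\diag(a_1,1,\ldots,1)\diag(a_2,1,\ldots,1)$ together with the observation that conjugation by $\lambda(a_2)$ in the semi-direct product becomes ordinary conjugation by $\diag(a_2,1,\ldots,1)$ after the identification. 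Under this isomorphism, the generalized determinant $\Fd_\fp$ corresponds to $\det$ since both are the abelianization morphism landing in $\BG_m$.

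For the dual side I would use Proposition \ref{prp:kernelfp} to write $H_\rho^\vee\cong(\BG_m\times\SL_n)/\{(\lambda(z)^{-1},z)\mid z\in\mu_n\}$. Since $\lambda=\omega_1$ restricted to $\mu_n\subset Z^\vee$ is the identity character, the quotient is by $\{(z^{-1},z)\mid z\in\mu_n\}$, and the map $[(a,g)]\mapsto ag$ furnishes a well-defined isomorphism to $\GL_n$. The representation $\rho_\fp$ is determined by the commutativity of diagram \eqref{diag-fp2}: tracing $[(a,g)]\mapsto ag\mapsto\rho(ag)=a\cdot\st(g)$ shows that $\rho_\fp$ is the standard representation of $\GL_n$ under this identification.

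The main obstacle is pinning down the correct explicit isomorphism $\BG_m\ltimes_\lambda\SL_n\cong\GL_n$: the twist by $\lambda$ in the semi-direct product law must be absorbed exactly by ordinary matrix multiplication, and a different choice of lift of $\lambda$ to $T$ would yield a different (but isomorphic) presentation. The choice above is essentially forced by matching abelianization maps, but verifying that the resulting map is an algebraic group isomorphism and that it correctly transports $\rho_\fp$ to the standard representation and $\Fd_\fp$ to $\det$ requires careful bookkeeping of the identifications on both the group and dual group sides. Once the isomorphisms are pinned down, all four assertions of the proposition follow directly.
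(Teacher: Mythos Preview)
Your proposal is correct and follows essentially the same approach as the paper's proof: the same diagonal torus setup, the same identification of $\lambda$ as $t\mapsto(t,1,\ldots,1)$, the same explicit isomorphism $(a,g)\mapsto\diag(a,1,\ldots,1)\cdot g$ with the same inverse, and the same dual-side identification $[(a,g)]\mapsto ag$. Your treatment is in fact slightly more detailed in places (the verification that the semi-direct product twist is absorbed by matrix multiplication, and the explicit tracing of $\rho_\fp$ through diagram \eqref{diag-fp2}), but the route is the paper's own.
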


\subsection{The adjoint representation of $\GL_n$}\label{ssec-AdGLn}
Let $G=\GL_n$ and $\rho_{\lambda}$ be the adjoint representation of $\GL_n(\BC)$ and $\lambda$ be its highest weight. In this case, we have that $G_0=\SL_n$. Take $T$ and $T_0$ as in Section \ref{ssec-StdGLn}. 

Since the character $\lambda$ can be lifted to a co-character $\BG_m\rightarrow T_0:a\mapsto \diag(a,1,\cdots,1,a^{-1})$,
by Corollary \ref{cor:ad}, which is a special case of Lemma \ref{lem_G_lambda}, we obtain that $H_\rho=\BG_m\times \SL_n$. 

On the dual side, we have 
\[
H_\rho^{\vee}=\BG_m\times\SL_n/\{(\omega_{\lambda}(z))^{-1},z)\mid z\in \mu_n\subset\SL_n\}
\cong\BG_m\times(\SL_n/\mu_n),
\]
where $\mu_n$ is the finite group scheme of $n$-th roots of unity, and the last equality is because $\omega_{\lambda}(z)=1$ for all $z\in\mu_n$. As algebraic groups, we obtain that $G_{\lambda}^{\vee}=\BG_m\times \PGL_n$. 

Finally the corresponding representation is $\rho_\fp=\st_{\BG_m}\otimes\Ad$, where $\Ad$ is the adjoint representation of $\PGL_n$ induced from that of $\SL_n$. To summarize, we have the following proposition.

\begin{prp}\label{prp:Adn}
    Let $\lambda$ be the highest weight for the adjoint representation $\rho_{\lambda}$ of $\GL_n$. 
    Then the $\bkn$-triple $\Delta_\bkn=(H_\rho,H_\rho^\vee,\rho_\fp)_{\Fd_\fp}$ associated to the triple $(\GL_n,\GL_n,\rho_{\lambda})$ is explicitly given as follows.
    \begin{enumerate}
    \item The group $H_\rho$ is given by 
    \[
    H_\rho=\BG_m\ltimes_{\lambda}\SL_n\cong\BG_m\times\SL_n
    \]
    \item The dual group $H_\rho^\vee$ is given by 
    \[
    H_\rho^\vee\cong
    \BG_m\times\PGL_n
    \]
    \item The representation $\rho_\fp$ is given by 
    \[
    \rho_\fp=\st_{\BG_m}\otimes\Ad,
    \]
    where $\Ad$ is the adjoint representation of $\PGL_n$ induced from that of $\SL_n$.
    \item The generalized determinant $\Fd_\fp\colon H_\rho\longrightarrow\BG_m$ is given by 
    \[
    \BG_m\times\SL_n\rightarrow\BG_m:(a,g)\mapsto a
    \]
\end{enumerate}
\end{prp}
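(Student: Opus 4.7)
The plan is to specialize the general structure theorems of Section \ref{sec-BKNT} to $G = G^\vee = \GL_n$ with $\rho_\lambda = \Ad$ the adjoint representation. The decisive preliminary observation is that $\Ad$ has trivial central character, and hence factors through $(G^\vee)_\ad = \PGL_n$: its highest weight is the highest root $\lambda = e_1 - e_n$, which lies in the root lattice of $T^\vee$. Viewed via the identification $X^*(T^\vee) = X_*(T)$, this $\lambda$ is the co-character $a \mapsto \diag(a, 1, \ldots, 1, a^{-1})$, whose image lies in $T_0 = T \cap \SL_n$. Hence $\lambda$ already lifts the canonical co-character of $T_0/Z_0$ appearing in Lemma \ref{lem_G_lambda}.

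For part (1), I would invoke Corollary \ref{cor:Hrhosc}(1) to identify $H_\rho \cong \BG_m \ltimes_\lambda G_0^\rsc = \BG_m \ltimes_\lambda \SL_n$, since $\SL_n$ is already simply connected. Because $\lambda$ lifts to $T_0 \subset \SL_n$, the computation underlying Corollary \ref{cor:ad} applies verbatim: the map $(a, g) \mapsto (a, \lambda(a) g)$ trivializes the semidirect product, giving $H_\rho \cong \BG_m \times \SL_n$.

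For part (2), I would apply the isomorphism \eqref{claim1} from the proof of Proposition \ref{prp:kernelfp}, namely $H_\rho^\vee \cong (\BG_m \times \GL_n)/\{(\lambda(z)^{-1}, z) \mid z \in Z^\vee\}$. Restricting $\lambda = e_1 - e_n$ to the center $Z^\vee = \{t I_n \mid t \in \BG_m\}$ yields $\lambda(t I_n) = t \cdot t^{-1} = 1$, so the subgroup being quotiented is precisely $\{1\} \times Z^\vee$; hence $H_\rho^\vee \cong \BG_m \times (\GL_n / Z^\vee) = \BG_m \times \PGL_n$.

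For parts (3) and (4), I would recall from \eqref{rhoFe} that $\rho_\fp$ is the descent of the extended representation $\rho^\Fe(a, g) = a \cdot \rho(g)$ to $H_\rho^\vee$. Under the identification in (2), this reads literally $\rho_\fp(a, [g]) = a \cdot \Ad(g)$, i.e., $\rho_\fp = \st_{\BG_m} \otimes \Ad$. The character $\Fd_\fp$ is dual to the first-factor embedding $\Fd_\fp^\vee \colon \BG_m \hookrightarrow H_\rho^\vee$, so under the direct-product structure $H_\rho \cong \BG_m \times \SL_n$ it becomes the projection onto $\BG_m$. There is no genuine obstacle here: the entire proposition amounts to routine bookkeeping once the single observation that $\Ad$ is trivial on $Z^\vee$ is in hand, since that observation simultaneously trivializes the semidirect product defining $H_\rho$ and collapses the fiber product defining $H_\rho^\vee$.
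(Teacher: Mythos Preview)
Your proposal is correct and follows essentially the same approach as the paper: both hinge on the single observation that the adjoint representation has trivial central character (equivalently, that $\lambda$ lifts to a cocharacter $a\mapsto\diag(a,1,\dots,1,a^{-1})$ of $T_0$), then invoke Corollary~\ref{cor:ad} to trivialize the semidirect product and the quotient formula for $H_\rho^\vee$ to collapse it to $\BG_m\times\PGL_n$. The only cosmetic difference is that you apply \eqref{claim1} with $G^\vee=\GL_n$ and quotient by its full center, whereas the paper uses the equivalent presentation $\BG_m\times\SL_n/\{(\lambda(z)^{-1},z)\mid z\in\mu_n\}$ via $G_0^{\vee,\rsc}=\SL_n$; both yield $\BG_m\times\PGL_n$ immediately.
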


\subsection{The symmetric square representation of $\GL_n$}\label{ssec-sym2GLn}
In this case, we consider the triple $(\GL_n,\GL_n,\rho_\lam)$ with $G=\GL_n$ and $\rho_\lambda$ being the symmetric square representation $\GL_n \to \GL_{\frac{n(n+1)}{2}}$, associated to the highest weight $2\ome_1$. 
Take $T$ and $T_0$ as in Section \ref{ssec-StdGLn}.

The highest weight $\lambda=2\ome_1 \in X^*(T^\vee)=X_*(T)$ of the symmetric square representation of $G^\vee$ gives 
a co-character:
\[
    \lambda\colon \BG_m\longrightarrow T,\quad a\mapsto \diag(a^2,1,\cdots,1).
    \]
Similar to the previous case, we can write $H_\rho$ as the image of the following homomorphism
\[
    \BG_m\ltimes_{\lambda}\SL_n\longrightarrow \BG_m \times \GL_n : (a,g) \mapsto (a,\diag(a^2,1,\cdots,1)\cdot g)
\]
which is clearly equal to 
\[
    \{ (a,g) \in \BG_m \times \GL_n \mid \det(g)=a^2 \}.
\]
More specifically, when $n=2l$, we have the following isomorphism
\begin{align*}
    \BG_m \times \SL_n / \{(z^{-1}, z\RI_n \mid z\in \mu_l  \} \cong H_\rho, \quad 
    [(a, g)] \mapsto (a^l,g \cdot a \RI_n),
\end{align*}
which is the same as in \cite{Sha97}.

Note on the dual side, we have that 
\[
    H_\rho^{\vee}=\BG_m\times\SL_n/\{(z^{-2},z\RI_n)\mid z\in\mu_n\},
\]
with the following isomorphisms
\begin{align*}
    \BG_m\times\SL_n/\{(z^{-2},z\RI_n)\mid z\in\mu_n\}&\cong\BG_m\times\GL_n/\{(z^{-2},z\RI_n)\mid z\in\BG_m \} : [(a,g)] \mapsto [(a,g)],\\
    \GL_n/\{\pm \RI_n \} &\cong \BG_m\times\GL_n/\{(z^{-2},z\RI_n)\mid z\in\BG_m \} : [g] \mapsto [(1,g)].
\end{align*}
Then the unique morphism $\GL_n \to H_\rho^{\vee}$ is given by the central isogeny
\[
    \GL_n \longrightarrow \GL_n/\{\pm\RI_n \}: g \mapsto [g].
\]
Since the symmetric square representation has its kernel equal to $\{ \pm \RI_n \}$, the given representation $\rho_\lambda$ descents to a representation $\rho^{\Fn}$ of $\GL_n/\{\pm\RI_n \}$.

Dual to the morphism $G^\vee \to G_{\lambda}^{\vee}$, we have the morphism $G_\lambda \to G$, which is explicitly given by 
\[
    \{ (a,g) \in \BG_m \times \GL_n \mid \det(g)=a^2 \} \to \GL_n : (a,g) \mapsto g.
\]
To summarize the discussion above, we have
\begin{prp}
    Let $\lambda=2\ome_1$ be the highest weight for the symmetric square representation $\rho_{\lambda}$ of $\GL_n$. Then the $\bkn$-triple $\Delta_\bkn=(H_\rho,H_\rho^\vee,\rho_\fp)_{\Fd_\fp}$ associated to the given triple $(\GL_n,\GL_n,\rho_{\lambda})$ is explicitly given as follows.
    \begin{enumerate}
    \item The group $H_\rho$ is given by 
    \[
    H_\rho=\BG_m\ltimes_{\lambda}\SL_n\cong\ \{ (a,g) \in \BG_m \times \GL_n \mid \det(g)=a^2 \}
    \]
    \item The dual group $H_\rho^\vee$ is given by 
    \[
    H_\rho^\vee\cong \GL_n/\{\pm\RI_n \}
    \]
    \item The representation $\rho_\fp$ is given by the descending of $\rho_\lambda$ from $\GL_n$ to $H_\rho^\vee$.
    \item The generalized determinant $\Fd_\fp\colon H_\rho\longrightarrow\BG_m$ is given by
    \[
        H_\rho \cong \{ (a,g) \in \BG_m \times \GL_n \mid \det(g)=a^2 \} \to \BG_m :  (a, g) \mapsto a.
    \]
\end{enumerate}
\end{prp}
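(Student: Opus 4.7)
The plan is to instantiate Corollaries \ref{cor:Hrhosc} and \ref{cor:Hrhod} for the present $L$-triple $(\GL_n,\GL_n,\rho_\lambda)$ with $\lambda=2\ome_1$, and then unwind the resulting abstract descriptions into the explicit forms asserted. Fix $T$ to be the diagonal maximal torus of $\GL_n$, with $T_0=T\cap \SL_n$ and $Z_0=\mu_n$. Viewed as a cocharacter of $T$, the weight $\lambda=2\ome_1\in X^*(T^\vee)=X_*(T)$ sends $a\mapsto \diag(a^2,1,\ldots,1)$; its composition with $T\to T_\ad=T/Z$ is the cocharacter entering the semidirect product formula \eqref{mf0}.

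For parts (1) and (4), Corollary \ref{cor:Hrhosc} gives $H_\rho\cong \BG_m\ltimes_\lambda \SL_n$. I define
\[
\Phi\colon \BG_m\ltimes_\lambda \SL_n \longrightarrow \{(a,g)\in \BG_m\times \GL_n\mid \det(g)=a^2\},\quad (a,h)\mapsto \bigl(a,\diag(a^2,1,\ldots,1)h\bigr),
\]
and verify that $\Phi$ is a group isomorphism by a direct computation with \eqref{mf0}: the twisted factor $\lambda(a_2)^{-1}h_1\lambda(a_2)$ is precisely what is needed so that $\Phi(a_1,h_1)\Phi(a_2,h_2)=\Phi\bigl((a_1,h_1)(a_2,h_2)\bigr)$; bijectivity is witnessed by the explicit inverse $(a,g)\mapsto (a,\diag(a^{-2},1,\ldots,1)g)$. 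Part (4) then reads off at once: the BKN character $(a,h)\mapsto a$ transports under $\Phi$ to the projection $(a,g)\mapsto a$, whose kernel $\{(1,g):\det(g)=1\}\cong \SL_n$ is the derived subgroup $[H_\rho,H_\rho]$, as required by the BKN sequence \eqref{es-1}.

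For parts (2) and (3), Corollary \ref{cor:Hrhod} identifies $H_\rho^\vee$ with $\BG_m\times \GL_n/\{(\lambda(zI)^{-1},zI):z\in \BG_m\}$; since $\lambda(zI)=z^2$, this is the quotient by $\{(z^{-2},zI)\}$. I then analyze the homomorphism $\psi\colon \GL_n\to H_\rho^\vee$, $g\mapsto [(1,g)]$: its kernel is $\{g:(1,g)=(z^{-2},zI)\text{ for some }z\}=\{\pm I\}$, so $\psi$ descends to an isogeny $\GL_n/\{\pm I\}\to H_\rho^\vee$ with trivial kernel between smooth connected algebraic groups of the same dimension $n^2$, hence an isomorphism of group schemes. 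For (3), the morphism $\eta_\fp^\vee\colon \GL_n\to H_\rho^\vee$ of \eqref{diag-fp2} becomes the canonical projection $g\mapsto [g]$ under this identification, and the fiber-product property $\rho_\fp\circ \eta_\fp^\vee=\rho_\lambda$ combined with $\ker(\Sym^2)=\{\pm I\}$ forces $\rho_\fp$ to be the descent of $\Sym^2$ to $\GL_n/\{\pm I\}$. The main bookkeeping hurdle is the argument that $\psi$ is an isomorphism of group schemes despite being non-surjective on rational points over non-algebraically closed fields (one would need square roots in $\BG_m$); this is handled by the dimension/kernel count above rather than by exhibiting a section.
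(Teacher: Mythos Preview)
Your proof is correct and follows essentially the same path as the paper. The only cosmetic difference is that for $H_\rho^\vee$ you invoke Corollary~\ref{cor:Hrhod} directly with $G^\vee=\GL_n$ and its full center $\BG_m$, whereas the paper first writes $H_\rho^\vee=\BG_m\times\SL_n/\{(z^{-2},z\RI_n)\mid z\in\mu_n\}$ and then passes through the intermediate isomorphism to $\BG_m\times\GL_n/\{(z^{-2},z\RI_n)\mid z\in\BG_m\}$ before reaching $\GL_n/\{\pm\RI_n\}$; your route is slightly shorter but equivalent, and your added dimension/kernel justification for the isomorphism $\GL_n/\{\pm\RI_n\}\cong H_\rho^\vee$ makes explicit what the paper leaves implicit.
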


\subsection{The doubling case of $\Sp_{2n}$}\label{ssec-DBSP} 
In this case, the given triple is $(\Sp_{2n},\SO_{2n+1},\rho_\lam)$ with $G=\Sp_{2n}=G_0$. Let $T=T_0$ be a maximal torus of $G$. Let $\rho_{\lambda}$ be the standard representation of $\SO_{2n+1}$ with $\lambda$ its highest weight. 
Since $T=T_0$ and $\lambda$ is already a co-character of $T_0$, by Corollary \ref{cor:ad}, which is a special case of Lemma \ref{lem_G_lambda}, we have in this case that $H_\rho=\BG_m\times\Sp_{2n}$.
On the dual side, we have that 
\[
H_\rho^{\vee}=\BG_m\times \mathrm{Spin}_{2n+1}/\{(1,\pm \RI_{2n+1})\}\cong\BG_m\times\SO_{2n+1}.
\]
Finally due to the construction of $\rho_\fp$, we see in this case that $\rho_\fp=\st_{\BG_m}\otimes\st_{\SO_{2n+1}}$, where $\st_{\BG_m}$ is the standard representation of $\BG_m$ and $\st_{\SO_{2n+1}}$ is the standard presentation of $\SO_{2n+1}$. In summary, we have the following. 

\begin{prp}\label{prp:StdSp}
    Let $\lambda$ be the highest weight for the standard representation $\rho_{\lambda}$ of $\SO_{2n+1}$. Then 
    the $\bkn$-triple $\Delta_\bkn=(H_\rho,H_\rho^\vee,\rho_\fp)_{\Fd_\fp}$ associated to the triple $(\Sp_{2n},\SO_{2n+1},\rho_{\lambda})$ is explicitly given as follows.
    \begin{enumerate}
    \item The group $H_\rho$ is given by 
    \[
    H_\rho=\BG_m\ltimes_{\lambda}\Sp_{2n}\cong\BG_m\times\Sp_{2n}
    \]
    \item The dual group $H_\rho^\vee$ is given by 
    \[
    H_\rho^\vee\cong
    \BG_m\times\SO_{2n+1}
    \]
    \item The representation $\rho_\fp$ is given by 
    \[
    \rho_\fp=\st_{\BG_m}\otimes\st_{\SO_{2n+1}},
    \]
    where $\st_{\BG_m}$ is the standard representation of $\BG_m$ and $\st_{\SO_{2n+1}}$ is the standard presentation of $\SO_{2n+1}$.
    \item The generalized determinant $\Fd_\fp\colon H_\rho\longrightarrow\BG_m$ is given by
    \[
        H_\rho\cong\BG_m\times\Sp_{2n} \to \BG_m : (a, g) \mapsto a.
    \]
\end{enumerate}
\end{prp}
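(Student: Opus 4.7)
\smallskip

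\noindent\textbf{Proof proposal.} The plan is to invoke the general machinery from Sections \ref{ssec-BKNT-FP} and \ref{ssec-CVM} --- in particular Corollaries \ref{cor:Hrhod} and \ref{cor:ad} --- and to exploit two structural simplifications that are specific to the pair $(\Sp_{2n},\SO_{2n+1})$: namely, $G=\Sp_{2n}$ is already simply connected and equals its own derived subgroup, while $G^\vee=\SO_{2n+1}$ is already of adjoint type (its center is trivial for $n\geq 1$). These two features together will collapse every quotient and twist appearing in the general construction, reducing the proposition to a bookkeeping exercise.

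\smallskip

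First I would verify the hypothesis: since $\SO_{2n+1}$ is simple and the standard representation is irreducible and non-trivial, the triple $(\Sp_{2n},\SO_{2n+1},\rho_\lambda)$ is an $L$-triple in the sense of Definition \ref{dfn:LT}. Next I would observe that $G_0=[G,G]=\Sp_{2n}$ and $T_0=T$, so the highest weight $\lambda=\omega_1\in X^*(T^\vee)=X_*(T)$ of the standard representation lifts tautologically to a co-character of $T_0$ itself, not merely of $T_0/Z_0$; equivalently, $\rho_\lambda$ factors through $(G^\vee)_{\ad}=\SO_{2n+1}$. This is precisely the hypothesis activating Corollary \ref{cor:ad} (the split case of Lemma \ref{lem_G_lambda}), which then yields the splitting isomorphism $H_\rho=\BG_m\ltimes_\lambda\Sp_{2n}\cong\BG_m\times\Sp_{2n}$, establishing claim (1). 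The generalized determinant $\Fd_\fp\colon H_\rho\to\BG_m$ is by construction the first-factor projection under this identification, giving claim (4).

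\smallskip

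For claim (2), I would apply Corollary \ref{cor:Hrhod}, which realizes $H_\rho^\vee$ as the fiber product $\BG_m\times_{(\lambda^{-1},\Id)(Z^\vee)}G^\vee$. Because $Z(\SO_{2n+1})$ is trivial, the equivalence relation degenerates and the fiber product collapses to the honest direct product $\BG_m\times\SO_{2n+1}$. Claim (3) is then read off from the universal morphism $\rho_\fp\colon H_\rho^\vee\to\GL_{2n+1}$ produced by the fiber-product diagram \eqref{diag-fp2}: under the identification just obtained, the defining property forces $\rho_\fp(a,g)=a\cdot\rho_\lambda(g)$, which is exactly the external tensor product $\st_{\BG_m}\otimes\st_{\SO_{2n+1}}$.

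\smallskip

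There is no substantive obstacle in this example; the main conceptual point that deserves an explicit sentence of proof is the lift of $\lambda$ from $X_*(T_0/Z_0)$ to $X_*(T_0)$ (equivalently, the factoring of the standard representation through the adjoint quotient of $G^\vee$), since it is this simplification that switches off both the non-trivial semidirect-product twist on the $H_\rho$ side and the non-trivial central quotient on the $H_\rho^\vee$ side, and thereby produces direct products in (1) and (2).
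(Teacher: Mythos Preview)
Your proposal is correct and follows essentially the same approach as the paper: both note that $G=G_0=\Sp_{2n}$ so that $T=T_0$ and $\lambda$ is already a co-character of $T_0$, then invoke Corollary~\ref{cor:ad} to split the semidirect product for part~(1), and read off parts~(3) and~(4) directly from the construction. The only cosmetic difference is in part~(2): the paper writes $H_\rho^\vee=\BG_m\times\mathrm{Spin}_{2n+1}/\{(1,\pm\RI_{2n+1})\}$ (via the $G_0^{\vee,\rsc}$-formula from Section~\ref{ssec-CVM}) and then reduces, whereas you invoke Corollary~\ref{cor:Hrhod} and use that $Z(\SO_{2n+1})$ is trivial---your route is a touch more direct, but the content is identical.
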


\section{Borel's Conjecture}\label{sec-BC}

In order to provide more explicit information of the representation $\sig$ for the given $\pi$ in Theorem \ref{thm:localm}, we intend to show that the morphism 
\begin{align}\label{etafp}
    \eta_\fp\colon H_\rho\longrightarrow G
\end{align}
which is dual to the morphism $\eta_\fp^\vee\colon G^\vee\longrightarrow H_\rho^\vee$ as defined in \eqref{diag-fp2}, satisfies the condition of the Borel conjecture in \cite{Bor79} and more generally in \cite[Conjecture 2]{Sol20}.

\begin{lem}\label{lem_borel}
    The morphism $\eta_\fp\colon H_\rho\longrightarrow G$ in \eqref{etafp} enjoys the following properties. 
    \begin{itemize}
        \item [(1)] The kernel of $\ud \eta_{\fp}$ is central.
        \item [(2)] The cokernel of $\eta_{\fp}$ is a commutative group defined over $F$.
    \end{itemize}
\end{lem}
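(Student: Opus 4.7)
The plan is to exploit the explicit description of $\eta_\fp$. By Corollary~\ref{cor:Hrhosc} together with the homomorphism~\eqref{map0} from the proof of Proposition~\ref{prp:es}, the morphism dual to $\eta_\fp^\vee$ is
\[
  \eta_\fp\colon H_\rho \cong \BG_m \ltimes_{\lambda} G_0^{\rsc}
  \longrightarrow G,
  \qquad (a,g) \longmapsto \lambda(a)\,\bar g,
\]
where $\lambda \in X_*(T)$ is the highest weight of $\rho$ regarded as a co-character of $T$ via $X^*(T^\vee) = X_*(T)$, and $g \mapsto \bar g$ denotes the simply connected covering $G_0^{\rsc} \to G_0$.

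For part (2), the image of $\eta_\fp$ contains all of $G_0$ because the isogeny $G_0^{\rsc} \to G_0$ is surjective. Hence the cokernel $G/\eta_\fp(H_\rho)$ is a quotient of the abelianization $G/G_0$, which is a torus (since $G$ is reductive). Therefore the cokernel is commutative, and it is defined over $F$ because $\eta_\fp$ is.

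For part (1), I would pass to the Lie algebra and compute $\ud\eta_\fp(c,X) = c\,\tilde H + \tilde X$, where $\tilde H := \ud\lambda(1) \in \Lie(T)$ and $\tilde X \in \Lie(G_0)$ is the image of $X \in \Lie(G_0^{\rsc})$ under the isogeny. In characteristic zero $\Lie(G) = \Lie(Z)^{\circ} \oplus \Lie(G_0)$, so splitting $\tilde H = H_Z + H_0$ accordingly, the condition $\ud\eta_\fp(c,X) = 0$ forces $cH_Z = 0$ and $\tilde X = -cH_0$. Either $\ker(\ud\eta_\fp) = 0$ (when $H_Z \neq 0$), or $H_Z = 0$ and $\ker(\ud\eta_\fp) = \{(c,-cH) : c \in \Lie(\BG_m)\}$, where $H \in \Lie(T_0^{\rsc})$ is the unique lift of $H_0$ under the isogeny.

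To verify centrality in the second case, I would compute $Z(\Lie(H_\rho))$ from the semidirect-product bracket
\[
  [(c_1,X_1),(c_2,X_2)]
  = \bigl(0,\; [X_1,X_2] + c_1[H',X_2] - c_2[H',X_1]\bigr),
\]
where $H' \in \Lie(T_0^{\rsc}) \cong \Lie(T_\ad)$ corresponds to the image of $\tilde H$ under the projection $\Lie(T) \to \Lie(T_\ad)$. Since $Z(\Lie(G_0^{\rsc})) = 0$ by semisimplicity, the centrality condition yields $Z(\Lie(H_\rho)) = \{(c,-cH') : c \in \Lie(\BG_m)\}$. The compatibility of the central isogenies $T_0^{\rsc} \to T_0 \to T_\ad$ with the direct isogeny $T_0^{\rsc} \to T_\ad$ (combined with the fact that, when $H_Z = 0$, the co-character $\lambda$ factors through $T_0$) identifies $H$ with $H'$ in $\Lie(T_0^{\rsc})$, so $\ker(\ud\eta_\fp) \subseteq Z(\Lie(H_\rho))$. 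The main technical point will be keeping the several canonical Lie-algebra identifications among $\Lie(T_0^{\rsc})$, $\Lie(T_0)$ and $\Lie(T_\ad)$ consistent; the remainder is linear algebra.
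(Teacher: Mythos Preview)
Your approach is essentially the same as the paper's: both use the explicit description $\eta_\fp(a,g)=\lambda(a)\bar g$ from \eqref{map0}, deduce part~(2) from the fact that the image contains $G_0$ so the cokernel is a quotient of the abelian group $G/G_0$, and handle part~(1) by computing $\ker(\ud\eta_\fp)$ and checking centrality via the semidirect-product Lie bracket. Your case distinction on whether $H_Z=0$ is more careful than the paper (which tacitly writes the kernel as $\{(A,-\ud\lambda(A))\}$ without checking that $\ud\lambda(A)\in\Fg_0$), but it is not strictly needed: one can simply verify that any element of the form $(A,-\ud\lambda(A))$ that happens to lie in $\Lie(H_\rho)$ brackets trivially with everything, which covers both cases at once and avoids the bookkeeping among $\Lie(T_0^{\rsc})$, $\Lie(T_0)$, and $\Lie(T_\ad)$.
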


\begin{proof}
    According to the proof of Proposition \ref{prp:es}, the morphism in \eqref{map0} and Corollary \ref{cor:Hrhosc}, the morphism $\eta_{\fp}$ is explicitly given by
    \[
    H_\rho\cong\BG_m\ltimes_{\lambda}G_0^\rsc\rightarrow G\colon (a,g)\mapsto \lambda(a)\bar{g}.
    \]
    Let $\Fg$ be the Lie algebra of $G$ and $\Fg_0:=[\Fg,\Fg]$, which is the Lie algebra of $G_0^{\rsc}$. Let $\Fg_a$ be the Lie algebra of $\BG_m$, then the morphism $\ud\eta$ is given by
    \[
    \ud\eta_{\fp}:\Fg_a\oplus\Fg_0\rightarrow\Fg: (A,X)\mapsto \ud\lambda(A)+X,
    \]
    where $\ud\lambda:\Fg_a\rightarrow\Fg$ is the differential of $\lambda:\BG_m\rightarrow G$. Hence we have the kernel of $\ud \eta$ given by 
    \[
    \{(A,-\ud\lambda(A))\mid A\in\Fg_a\}.
    \]
    According to the multiplication of $H_{\rho}$ given in Corollary \ref{cor:Hrhosc}, the Lie bracket in $\Fg_a\oplus\Fg_0$ is given as
    \[
    [(A_1,X_1),(A_2,X_2)]_{H_{\rho}}=(0,-[[\ud\lambda( A_2),X_1],X_2]),
    \]
    where the brackets on the right-hand side of the equality is the Lie bracket in $\Fg$. Then for any $A\in\Fg_a$, and $(A^{\prime},X^{\prime})\in\Fg_a\oplus\Fg_0$, we have
    \[
    [(A,-\ud\lambda(A)),(A^{\prime},X^{\prime})]_{H_{\rho}}=(0,[[\ud\lambda(A^{\prime}),\ud\lambda(A)],X^{\prime}])=0
    \]
    since $[\ud\lambda(A^{\prime}),\ud\lambda(A)]=\ud\lambda([A,A^{\prime}])=0$, where $[A,A^{\prime}]$ denotes the trivial Lie bracket in $\Fg_a$. This proves Part $(1)$.

    As for Part $(2)$, note that $G/\eta_{\fp}(H_{\rho})=G/\lambda(\BG_m)G_0$, and since $G/G_0$ is already abelian, we obtain that $G/\eta_{\fp}(H_{\rho})$ is also abelian and defined over $F$ since every group is defined over $F$.
We are done. \end{proof}

Assume that the general Borel conjecture holds for the morphism $\eta_{\fp}\colon H_{\rho}\to G$, which under the above Condition of the Lemma \ref{lem_borel} is proved for many cases in \cite[Theorem 3]{Sol20}.
We deduce that the $\sigma$ appearing in Theorem \ref{thm:localm} occurs as a direct summand in the direct sum decomposition of the pull-back of $\pi$ under the morphism $\eta_{\Fp}$, with expected multiplicity (\cite[Conjecture 2]{Sol20}. Hence Theorem \ref{thm:localm} can be improved 

\begin{thm}
    Assume that the local Langlands conjecture over $F$ holds for $(G,G^{\vee},\rho)$ and its associated 
    $\bkn$-triple $\Delta_\fp=(H_\rho,H_\rho^\vee,\rho_\fp)_{\Fd_\fp}=\Delta_\bkn(\Fd_\fp)$. Assume that the Borel conjecture holds for the 
    morphism $\eta_{\fp}\colon H_{\rho}\to G$ as in \eqref{etafp}. Then for any $\pi\in\Pi_F(G)$, there exists a $\sigma\in\Pi_F(H_{\rho})$, which occurs in the direct decomposition of the pull-back of $\pi$ 
    via the morphism $\eta_{\fp}$,     
    such that 
    \[
    L(s,\pi,\rho)=L(s,\sigma,\rho_{\fp}). 
    \]
\end{thm}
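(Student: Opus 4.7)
The plan is to bootstrap Theorem~\ref{thm:localm} by using the Borel conjecture to upgrade the abstract existence statement for $\sigma$ to an explicit realization inside the pull-back $\eta_\fp^*\pi$. The equality of $L$-factors is already packaged in Theorem~\ref{thm:localm}; what is new here is the localization of $\sigma$ as a summand of $\eta_\fp^*\pi$.

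First, I would recall from the proof of Theorem~\ref{thm:localm} that, given $\pi\in\Pi_F(G)$ with local Langlands parameter $\phi_\pi\colon\ScL_F\to G^\vee(\BC)$, the composition $\phi_\sigma:=\eta_\fp^\vee\circ\phi_\pi$ is an admissible $L$-parameter into $H_\rho^\vee(\BC)$. Under the local Langlands conjecture for $H_\rho(F)$, there exists $\sigma\in\Pi_F(H_\rho)$ with Langlands parameter $\phi_\sigma$, and the equality $L(s,\pi,\rho)=L(s,\sigma,\rho_\fp)$ is an immediate consequence of the commutative diagram \eqref{diag-L1} and the definition \eqref{LLF}.

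Next, I would invoke Lemma~\ref{lem_borel}, which checks that $\eta_\fp\colon H_\rho\to G$ satisfies the two hypotheses of the general form of the Borel conjecture stated in \cite[Conjecture~2]{Sol20}: the kernel of $\ud\eta_\fp$ is central and the cokernel of $\eta_\fp$ is commutative and defined over $F$. Under these hypotheses, the Borel conjecture asserts that for any $\pi\in\Pi_F(G)$, the pull-back $\eta_\fp^*\pi$ decomposes as a finite direct sum of irreducible admissible representations of $H_\rho(F)$, and an irreducible $\sigma'\in\Pi_F(H_\rho)$ occurs as a summand precisely when its Langlands parameter is $H_\rho^\vee(\BC)$-conjugate to $\eta_\fp^\vee\circ\phi_\pi$.

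Combining these two steps, the $\sigma$ produced in the first step satisfies $\phi_\sigma=\eta_\fp^\vee\circ\phi_\pi$, and so by the Borel conjecture invoked in the second step, $\sigma$ appears as a direct summand of $\eta_\fp^*\pi$. The equality of $L$-factors already holds by the first step, which completes the proof. The main conceptual obstacle is the Borel conjecture itself, which is assumed in the statement; once it is granted, the only technical point I would verify carefully is that the formulation in \cite{Sol20} really parametrizes summands of the pull-back by $H_\rho^\vee(\BC)$-conjugacy classes of parameters lifting $\phi_\pi$ along $\eta_\fp^\vee$, since this is exactly the compatibility needed to align the $\sigma$ coming from Theorem~\ref{thm:localm} with a summand of $\eta_\fp^*\pi$. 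Under the explicit hypotheses verified by Lemma~\ref{lem_borel}, this is indeed the content of the conjecture as stated there, so the proof reduces to the assembly described above.
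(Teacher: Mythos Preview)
Your proposal is correct and follows essentially the same approach as the paper: the paper's argument is the short paragraph immediately preceding the theorem, which invokes Theorem~\ref{thm:localm} to produce $\sigma$ with parameter $\eta_\fp^\vee\circ\phi_\pi$ and the $L$-factor identity, then uses Lemma~\ref{lem_borel} together with the assumed Borel conjecture (\cite[Conjecture~2]{Sol20}) to conclude that this $\sigma$ occurs as a summand of the pull-back $\eta_\fp^*\pi$. Your write-up is in fact somewhat more detailed than the paper's, which simply states the deduction without spelling out the diagram-chasing.
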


\bibliographystyle{alpha}
	\bibliography{references}
\end{document}